\documentclass[10pt,reqno,oneside]{amsproc}
\title[The analytic construction method]{The analytic method of constructing local-in-time solutions of the incompressible Euler equations in Sobolev spaces}
\author[I.~Kukavica]{Igor Kukavica}
\address{University of Southern California, Los Angeles, CA 90089}
\email{kukavica@usc.edu}
\author[W.~S.~O\.za\'nski]{Wojciech  O\.za\'nski}
\address{Florida State University, Tallahassee, FL 32306, and Princeton University, Princeton, NJ 08540}
\email{wozanski@fsu.edu}
  \chardef\forshowkeys=0
  \chardef\refcheck=0
  \chardef\showllabel=0
  \chardef\sketches=0
\usepackage{enumitem}
\usepackage{datetime}
\usepackage{fancyhdr}
\usepackage{comment}
\allowdisplaybreaks

\ifnum\forshowkeys=1

  \usepackage[notref,notcite,color]{showkeys}
\usepackage{showkeys}
\fi
\usepackage[margin=1in]{geometry}
\usepackage{amsmath, amsthm, amssymb, mathtools, caption}
\usepackage{times}
\usepackage{graphicx}
\usepackage[usenames,dvipsnames,svgnames,table]{xcolor}
\usepackage{marginnote}
\usepackage[unicode,breaklinks=true,colorlinks=true,linkcolor=black,urlcolor=black,citecolor=black]{hyperref}
\usepackage[most]{tcolorbox}
\usepackage{tikz}

\ifnum\refcheck=1
  \usepackage{refcheck}
\fi
\begin{document}
\def\YY{X}
\def\na{\nabla }
\def\oo{{\Omega'}}
\def\ooo{\overline{\Omega'}}
\def\wo{{\Omega}}
\def\owo{\overline{\Omega}}
\def\OO{\mathcal O}
\def\SS{\mathbb S}
\def\CC{\mathbb C}
\def\RR{\mathbb R}
\def\TT{\mathbb T}
\newcommand{\Red}[1]{\begingroup\color{red} #1\endgroup} 
\def\T{\mathcal{T}}
\def\ZZ{\mathbb Z}
\def\HH{\mathbb H}
\def\RSZ{\mathcal R}
\def\LL{\mathcal L}
\def\SL{\LL^1}
\def\ZL{\LL^\infty}
\def\GG{\mathcal G}
\def\tt{\langle t\rangle}
\def\erf{\mathrm{Erf}}
\def\mgt#1{\textcolor{magenta}{#1}}
\def\ff{\rho}
\def\gg{G}
\def\sqrtnu{\sqrt{\nu}}
\def\ww{w}
\def\ft#1{#1_\xi}
\def\lec{\lesssim}
\def\gec{\gtrsim}
\renewcommand*{\Re}{\ensuremath{\mathrm{{\mathbb R}e\,}}}
\renewcommand*{\Im}{\ensuremath{\mathrm{{\mathbb I}m\,}}}

\ifnum\showllabel=0
 \def\llabel#1{\marginnote{\color{lightgray}\rm\small(#1)}[-0.0cm]\notag}
\else
 \def\llabel#1{\notag}
\fi

\newcommand{\eqnb}{\begin{equation}}
\newcommand{\eqne}{\end{equation}}

\newcommand{\nn}{\mathsf{n}}
\newcommand{\norm}[1]{\left\|#1\right\|}
\newcommand{\nnorm}[1]{\lVert #1\rVert}
\newcommand{\abs}[1]{\left|#1\right|}
\newcommand{\NORM}[1]{|\!|\!| #1|\!|\!|}
\newtheorem{theorem}{Theorem}
\newtheorem{Theorem}{Theorem}
\newtheorem{corollary}[theorem]{Corollary}
\newtheorem{Corollary}[theorem]{Corollary}
\newtheorem{proposition}[theorem]{Proposition}
\newtheorem{Proposition}[theorem]{Proposition}
\newtheorem{Lemma}[theorem]{Lemma}
\newtheorem{lemma}[theorem]{Lemma}
\theoremstyle{definition}
\newtheorem{definition}{Definition}[section]
\newtheorem{Remark}[theorem]{Remark}
\def\theequation{\thesection.\arabic{equation}}
\numberwithin{equation}{section}
\definecolor{mygray}{rgb}{.6,.6,.6}
\definecolor{myblue}{rgb}{9, 0, 1}
\definecolor{colorforkeys}{rgb}{1.0,0.0,0.0}
\newlength\mytemplen
\newsavebox\mytempbox
\makeatletter
\newcommand\mybluebox{%
    \@ifnextchar[
       {\@mybluebox}%
       {\@mybluebox[0pt]}}
\def\@mybluebox[#1]{%
    \@ifnextchar[
       {\@@mybluebox[#1]}%
       {\@@mybluebox[#1][0pt]}}
\def\@@mybluebox[#1][#2]#3{
    \sbox\mytempbox{#3}%
    \mytemplen\ht\mytempbox
    \advance\mytemplen #1\relax
    \ht\mytempbox\mytemplen
    \mytemplen\dp\mytempbox
    \advance\mytemplen #2\relax
    \dp\mytempbox\mytemplen
    \colorbox{myblue}{\hspace{1em}\usebox{\mytempbox}\hspace{1em}}}
\makeatother
\def\bnew{\colr }
\def\enew{\colb {}}
\def\bold{\colu }
\def\eold{\colb {}}
\def\phiij{\phi_{ij}}
\def\un{u^{(n)}}
\def\Bn{B^{(n)}}
\def\unp{u^{(n+1)}}
\def\unm{u^{(n-1)}}
\def\Bnp{B^{(n+1)}}
\def\Bnm{B^{(n-1)}}
\def\pn{p^{(n)}}
\def\pnm{p^{(n-1)}}

\def\ee{\mathrm{e}}
\def\eeo{\overline{\epsilon}}
\def\eet{\bar\epsilon}
\def\us{U}
\def\rr{r}
\def\weaks{\text{\,\,\,\,\,\,weakly-* in }}
\def\inn{\text{\,\,\,\,\,\,in }}
\def\cof{\mathop{\rm cof\,}\nolimits}
\def\Dn{\frac{\partial}{\partial N}}
\def\Dnn#1{\frac{\partial #1}{\partial N}}
\def\tdb{\tilde{b}}
\def\tda{b}
\def\qqq{u}
\def\lat{\Delta_2}
\def\biglinem{\vskip0.5truecm\par==========================\par\vskip0.5truecm}
\def\inon#1{\hbox{\ \ \ \ \ \ \ }\hbox{#1}}                
\def\onon#1{\inon{on~$#1$}}
\def\inin#1{\inon{in~$#1$}}
\def\FF{F}
\def\andand{\text{\indeq and\indeq}}
\def\ww{w(y)}
\def\startnewsection#1#2{\newpage \section{#1}\label{#2}\setcounter{equation}{0}}   
\def\nnewpage{ }
\def\sgn{\mathop{\rm sgn\,}\nolimits}    
\def\Tr{\mathop{\rm Tr}\nolimits}    
\def\div{\mathop{\rm div}\nolimits}
\def\curl{\mathop{\rm curl}\nolimits}
\def\dist{\mathop{\rm dist}\nolimits}  
\def\supp{\mathop{\rm supp}\nolimits}
\def\indeq{\quad{}}           
\def\period{.}                       
\def\semicolon{\,;}                  
\def\colr{\color{red}}
\def\colrr{\color{black}}
\def\colb{\color{black}}
\def\coly{\color{lightgray}}
\definecolor{colorgggg}{rgb}{0.1,0.5,0.3}
\definecolor{colorllll}{rgb}{0.0,0.7,0.0}
\definecolor{colorhhhh}{rgb}{0.3,0.75,0.4}
\definecolor{colorpppp}{rgb}{0.7,0.0,0.2}
\definecolor{coloroooo}{rgb}{0.45,0.0,0.0}
\definecolor{colorqqqq}{rgb}{0.1,0.7,0}
\def\colg{\color{colorgggg}}
\def\collg{\color{colorllll}}
\def\coleo{\color{colorpppp}}
\def\cole{\color{black}}
\def\colu{\color{blue}}
\def\colc{\color{colorhhhh}}
\def\colW{\colb}   
\definecolor{coloraaaa}{rgb}{0.6,0.6,0.6}
\def\colw{\color{coloraaaa}}
\def\comma{ {\rm ,\qquad{}} }            
\def\commaone{ {\rm ,\quad{}} }          
\def\cmi#1{{\color{red}IK: \hbox{\bf ~#1~}}}
\def\nts#1{{\color{red}\hbox{\bf ~#1~}}} 
\def\ntsf#1{\footnote{\color{colorgggg}\hbox{#1}}} 
\def\blackdot{{\color{red}{\hskip-.0truecm\rule[-1mm]{4mm}{4mm}\hskip.2truecm}}\hskip-.3truecm}
\def\bluedot{{\color{blue}{\hskip-.0truecm\rule[-1mm]{4mm}{4mm}\hskip.2truecm}}\hskip-.3truecm}
\def\purpledot{{\color{colorpppp}{\hskip-.0truecm\rule[-1mm]{4mm}{4mm}\hskip.2truecm}}\hskip-.3truecm}
\def\greendot{{\color{colorgggg}{\hskip-.0truecm\rule[-1mm]{4mm}{4mm}\hskip.2truecm}}\hskip-.3truecm}
\def\cyandot{{\color{cyan}{\hskip-.0truecm\rule[-1mm]{4mm}{4mm}\hskip.2truecm}}\hskip-.3truecm}
\def\reddot{{\color{red}{\hskip-.0truecm\rule[-1mm]{4mm}{4mm}\hskip.2truecm}}\hskip-.3truecm}
\def\tdot{{\color{green}{\hskip-.0truecm\rule[-.5mm]{6mm}{3mm}\hskip.2truecm}}\hskip-.1truecm}
\def\gdot{\greendot}
\def\bdot{\bluedot}
\def\pdot{\purpledot}
\def\ydot{\cyandot}
\def\rdot{\cyandot}
\def\fractext#1#2{{#1}/{#2}}
\def\ii{\hat\imath}
\def\fei#1{\textcolor{blue}{#1}}
\def\vlad#1{\textcolor{cyan}{#1}}
\def\igor#1{\text{{\textcolor{colorqqqq}{#1}}}}
\def\igorf#1{\footnote{\text{{\textcolor{colorqqqq}{#1}}}}}
\def\Sl{S_{\text l}}
\def\Sll{\bar S_{\text l}}
\def\Sh{S_{\text h}}
\newcommand{\p}{\partial}
\newcommand{\os}{{\overline{S}}}
\newcommand{\oos}{{\overline{\overline{S}}}}
\newcommand{\low}{\mathrm{l}}
\newcommand{\high}{\mathrm{h}}
\newcommand{\UE}{U^{\rm E}}
\newcommand{\PE}{P^{\rm E}}
\newcommand{\KP}{K_{\rm P}}
\newcommand{\uNS}{u^{\rm NS}}
\newcommand{\vNS}{v^{\rm NS}}
\newcommand{\pNS}{p^{\rm NS}}
\newcommand{\omegaNS}{\omega^{\rm NS}}
\newcommand{\uE}{u^{\rm E}}
\newcommand{\vE}{v^{\rm E}}
\newcommand{\pE}{p^{\rm E}}
\newcommand{\omegaE}{\omega^{\rm E}}
\newcommand{\ua}{u_{\rm   a}}
\newcommand{\va}{v_{\rm   a}}
\newcommand{\omegaa}{\omega_{\rm   a}}
\newcommand{\ue}{u_{\rm   e}}
\newcommand{\ve}{v_{\rm   e}}
\newcommand{\omegae}{\omega_{\rm e}}
\newcommand{\omegaeic}{\omega_{{\rm e}0}}
\newcommand{\ueic}{u_{{\rm   e}0}}
\newcommand{\veic}{v_{{\rm   e}0}}
\newcommand{\vp}{v^{\rm P}}
\newcommand{\tup}{{\tilde u}^{\rm P}}
\newcommand{\bvp}{{\bar v}^{\rm P}}
\newcommand{\omegap}{\omega^{\rm P}}
\newcommand{\tomegap}{\tilde \omega^{\rm P}}
\renewcommand{\vp}{v^{\rm P}}
\renewcommand{\omegap}{\Omega^{\rm P}}
\renewcommand{\tomegap}{\omega^{\rm P}}
\renewcommand{\d}{\mathrm{d}}
\newcommand{\R}{\mathbb{R}}
\newcommand{\C}{\mathbb{C}}
\newcommand{\N}{\mathbb{N}}
\renewcommand{\tt}{{\widetilde{\tau}}}
\newcommand{\im}{\mathrm{Im}}
\newcommand{\re}{\mathrm{Re}}
\newcommand{\id}{\mathrm{id}}

\newcommand{\tX}{{\widetilde{X}}}
\newcommand{\tY}{{\widetilde{Y}}}
\newcommand{\oY}{{\overline{Y}}}
\newcommand{\pmf}{{\psi^{(m)}_{\rm f}}}
\renewcommand{\pmb}{{\psi^{(m)}_{\rm b}}}

\begin{abstract}
We introduce a new method for constructing local-in-time solutions of the incompressible Euler equations in Sobolev spaces on an arbitrary Sobolev bounded domain. The method is based on  a  construction of an analytic solution in an analytically approximated domain, after which we apply analytic persistence to extend the analytic solution using given a priori bounds in Sobolev spaces. The method does not introduce \emph{any} modification or regularization of the equations themselves and appears applicable to many other PDEs.
\end{abstract}

\maketitle
\setcounter{tocdepth}{2}

\section{Introduction}
We let $r\geq 3$ be an integer, and we consider any bounded domain $\Omega\subset \R^3$ that is Sobolev in the sense that  $\Omega $ is piecewise a graph of a $H^{r+1}$ function, after a rotation of coordinates.    We consider
the three-dimensional incompressible Euler equations,
\eqnb\label{euler}
\begin{split}
\p_t u + u\cdot \nabla u + \nabla p &=0,\\
\div \, u&=0 \quad \text{ in }  \Omega,\\
u\cdot \nn  &=0 \quad \text{ on } \p\Omega,
\end{split}
\eqne
 where $\mathsf{n}$ denotes the unit outward normal vector to $\p \Omega$. We are  concerned with the classical problem of constructing local-in-time solutions in Sobolev spaces $H^r\equiv H^r (\Omega )$.

A well-known Sobolev a~priori estimate can be obtained by applying $\p^\alpha$ to the equation, where $|\alpha |=r$, multiplying by $\p^\alpha u$ and integrating in space, obtaining, formally,
\eqnb\label{sob_est}
\frac{\d }{\d t } \| u\|_{H^r} \leq C_0 \| u \|_{H^r}^2
,
\eqne
for some $C_0 = C_0 (\Omega , r) >0$; see~\cite{T1}. Since we are concerned with solutions continuous in time with values in $H^r$, we will consider \eqref{sob_est} in the integral sense,
\eqnb\label{sob_est1}
 \| u(t) \|_{H^r} \leq \| u(s) \|_{H^r} + C_0 \int_s^t \| u \|_{H^r}^2
\eqne
for $0\leq s<t$. We note that \eqref{sob_est1} gives  that $\| u(t) \|_{H^r} \leq 2 \| u_0\|_{H^r}$ for $t\in [0, T  ]$, where
\eqnb\label{time_lb}
 T   \coloneqq \frac{1}{8C_0 \| u_0 \|_{H^r}}, 
\eqne
and it suggests local-in-time well-posedness in $H^r$ until~$ T  $. That is, letting
\eqnb\label{V_def}
V \coloneqq \{ u\in H^r (\Omega ) \colon \div u =0\text{ in }\Omega , \, u\cdot \nn =0 \text{ on }\p \Omega  \},
\eqne
we have the following classical theorem.

\begin{theorem}\label{T1}
Given $u_0 \in V$, there exists a unique solution $u\in C([0, T  ];V)$ to the Euler equations \eqref{euler} such that $u(0)=u_0$ and \eqref{sob_est} holds (in the integral sense) on $[0, T  ]$. 
\end{theorem}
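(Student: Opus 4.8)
The plan is to construct the solution via the method advertised in the title, which the paper calls \emph{TURBO}: rather than regularizing the Euler equations, we approximate the \emph{domain} by a real-analytic one, solve the Euler system there in the analytic category (where a Cauchy--Kovalevskaya-type scheme or an abstract analytic fixed-point argument yields a short-time analytic solution), and then transfer these solutions to the original Sobolev domain by establishing the a~priori bound \eqref{sob_est} uniformly in the approximation parameter. Concretely, using \eqref{sob_domain}, I would first fix a family of analytic defining functions $d_\varepsilon \to d$ with $\Omega_\varepsilon = \{d_\varepsilon > 0\}$ converging to $\Omega$ in an $H^{r+2}$ sense on the neighborhood $\widetilde\Omega$, and a corresponding approximation $u_{0,\varepsilon} \in V(\Omega_\varepsilon)$ of the datum $u_0$, with $\|u_{0,\varepsilon}\|_{H^r(\Omega_\varepsilon)} \le (1+o(1))\|u_0\|_{H^r(\Omega)}$.

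The key steps, in order, are as follows. First, for each fixed $\varepsilon$ I would obtain a local-in-time analytic solution $u_\varepsilon$ of \eqref{euler} on $\Omega_\varepsilon$; since the domain and data are analytic, the pressure is recovered from the elliptic Neumann problem $-\Delta p_\varepsilon = \partial_i u_\varepsilon^j \partial_j u_\varepsilon^i$ with $\partial_N p_\varepsilon = -(u_\varepsilon\cdot\nabla u_\varepsilon)\cdot\nn$ on $\partial\Omega_\varepsilon$, preserving analyticity. Second, I would run the energy estimate \eqref{sob_est} on $\Omega_\varepsilon$, verifying that the constant $C_0$ can be taken uniform in $\varepsilon$; by \eqref{time_lb} this guarantees a common existence time $T_0$ and a uniform bound $\|u_\varepsilon(t)\|_{H^r(\Omega_\varepsilon)} \le 2\|u_0\|_{H^r}$ on $[0,T_0]$. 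Third, after pulling everything back to $\Omega$ (or to a fixed reference configuration) via the diffeomorphisms induced by $d_\varepsilon$, I would extract a weak-$*$ limit $u \in L^\infty([0,T_0];V)$ using Banach--Alaoglu, upgrade to strong convergence in lower norms by Aubin--Lions (controlling $\partial_t u_\varepsilon$ through the equation), pass to the limit in the weak formulation, and recover the time continuity $u \in C([0,T_0];V)$ together with the integral form of \eqref{sob_est} by a weak-continuity/energy-equality argument. Uniqueness follows from a standard $L^2$ energy estimate on the difference of two solutions, using $r>3$ so that $\nabla u \in L^\infty$.

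The main obstacle I expect is \emph{not} the analytic solvability on each $\Omega_\varepsilon$, but rather the uniformity of the constant $C_0$ in \eqref{sob_est} across the family $\{\Omega_\varepsilon\}$. The energy estimate hinges on integration-by-parts identities, trace and commutator estimates, and elliptic pressure bounds, all of which carry domain-dependent constants governed by the $H^{r+2}$ geometry of the boundary; the content of the method is precisely that the approximation in \eqref{sob_domain} is chosen in $H^{r+2}$ so that these geometric constants stay controlled. A related subtlety is the identification of the limit's boundary condition $u\cdot\nn = 0$ on $\partial\Omega$ and the divergence-free constraint in the limit, since the traces are taken on moving boundaries $\partial\Omega_\varepsilon$; handling this requires the convergence $\nn_\varepsilon \to \nn$ to hold in a sufficiently strong norm, which is again supplied by the $H^{r+2}$ closeness of the defining functions. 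Once uniformity and these limiting identifications are secured, assembling the statement of Theorem~\ref{T1} is routine.
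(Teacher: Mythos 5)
Your overall architecture matches the paper's: approximate the domain by analytic ones via mollified defining functions, approximate the data, solve in the analytic category on each approximating domain, prove uniform $H^r$ bounds, and pass to the limit by Aubin--Lions, with uniqueness from an $L^2$ energy argument. The technical points you flag (uniformity of $C_0$ across the family of domains, convergence of the normals, and the need to restore $\div u_{0,\varepsilon}=0$ and $u_{0,\varepsilon}\cdot\nn=0$ after mollification) are genuine and are handled in the paper by an explicit $H^{r+2}$ diffeomorphism $\eta\colon\Omega\to Q$ (Step~3b), a div--curl correction of the data solved by a Neumann series (Step~3c), and a Stokes-system correction in Lemma~\ref{L_an_approx}.

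There is, however, a genuine gap at the point where you pass from ``short-time analytic solution on $\Omega_\varepsilon$'' to ``a common existence time $T_0$ guaranteed by \eqref{time_lb}.'' A Cauchy--Kovalevskaya-type or abstract analytic fixed-point argument produces a solution only on a time interval controlled by the \emph{analytic} norm of the data and the analyticity radius, and both degenerate as $\varepsilon\to 0$: the mollified data have analytic norms that blow up and radii that shrink. The Sobolev estimate \eqref{sob_est} tells you the $H^r$ norm stays bounded on $[0,T_0]$ \emph{if} the solution exists there, but it does not by itself continue the analytic solution past its (possibly very short) guaranteed lifespan, so your second step does not yet ``guarantee a common existence time.'' What is needed---and what constitutes the core of the paper's Step~1---is a persistence-of-analyticity mechanism: an a~priori estimate of the form \eqref{apriori_an} in the spaces $X(\tau)$, $Y(\tau)$, coupled with a decreasing analyticity radius $\tau(t)$ chosen via the ODE \eqref{tau_choice_actual}, so that the dissipative term $-\dot\tau\|u\|_{Y}$ absorbs the derivative loss and the analytic norm is controlled by the Sobolev norm alone (see \eqref{002a}--\eqref{002c}). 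This is precisely what makes each analytic approximate solution live up to $T_0=1/8C_0\|u_0\|_{H^r}$ uniformly in $\varepsilon$; without it the compactness argument has no common time interval on which to operate. You should add this continuation step explicitly; you have also not addressed that the obstacle you single out as the main one (uniformity of $C_0$) is comparatively mild, since $C_0(Q,r)\to C_0(\Omega,r)$ follows from the $H^{r+2}$ convergence of the domains.
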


We note that the classical proofs of Theorem~\ref{T1} are tedious even in the case of $\R^3$
and require a modification of the equation and an appropriate convergence scheme.
The problems of constructing solutions become even more tedious when considering
free-boundary problems (see \cite{AKOT} for instance for a discussion on this topic).

The purpose of this note is to give a new proof of Theorem~\ref{T1}. Our approach is based on a Picard iteration using analytic spaces $X(\tau )$, $Y(\tau)$, where $\tau (t)$ denotes the radius of analyticity (see \eqref{EQ19c} for definitions). The use of such spaces is coupled with analytic approximation of both the initial data $u_0$ and the given Sobolev domain $\Omega$ by an analytic domain~$ \Omega^\varepsilon $. In that sense the data and the domain receive an \emph{analytic boost} (hence the name ``the analytic method''), which enables us to employ \emph{analytic dissipation} provided by the monotonicity of $\tau (t)$ to obtain a converging Picard estimate. We emphasize that the method does not introduce \emph{any} modification or regularization to the equations \eqref{euler}, and appears applicable to many PDEs of incompressible fluid mechanics.

 A clear example of the relevance of the method can be demonstrated in the case of the  inviscid MHD system, where the only available construction method is with the use of Elsasser \cite{El} variables, which fails for the MHD system  perturbed around a given background velocity field. The reason for this is that the use of these variables produces  an artificial pressure term in the equation for the magnetic field\footnote{ See~\cite{RW1} and the discussion following (1.5) in \cite{KO1} for details. In the case when background velocity field is not used, the method of Elsasser variables can still be used if the forcing in the $b$ equation is divergence-free, see \cite[Theorem~3.6]{R} for example.}. The analytic method introduced here  appears to be the only way of constructing solutions of the  homogeneous  MHD system posed on a bounded domain and perturbed around a given background velocity field. In particular, it helped resolve the 3D exact boundary controllability problem of the ideal MHD system, see \cite{KO1} for details;  see also \cite{C1,C2,KNV,R,RW2} for related results on the controlability of fluids.

\begin{Remark}[extendability of the method]
We suspect that the method presented here  can be easily generalized to other equations, such as the Boussinesq equation  and the Navier-Stokes equations.  Moreover, we conjecture that the method could be useful for  the free-boundary Euler equations, which were first resolved in the analytic setting by \cite{S} (see also \cite{E}), while later works, such as \cite{W1,W2,CS1,CS2} as well as \cite{AKOT,KO2}, focus on the Sobolev setting of the equations.

To be precise, the applicability of the method requires \emph{a priori} estimates in a sufficiently regular function space. While we verify these estimates in the Sobolev setting, other function spaces may also be appropriate. For example, for Besov spaces on $\Omega=\mathbb{R}^3$, the local \emph{a priori} bound follows from \cite{Z}. Considering, for example the Besov spaces which embed in $H^r$, one can obtain the persistence of analyticity estimate analogous to \eqref{002a} (see also Remark~\ref{rem3} for more details).

The second ingredient is that the equation  exhibits no more than one derivative loss, so that the analytic approach may overcome it, thereby yielding local existence for analytic data.  For example, the method cannot be used to construct solutions to the backward heat equation $\p_t u + \Delta u =0$. Indeed, if $u_0$ has a finite radius of analyticity, then there can be no solution, since otherwise this would contradict the fact that solutions to the forward heat equation are entire. In this example $\Delta u$ represents a loss of two derivatives, but the example can be generalized by replacing this operator by $(1+\Delta )^{s/2}$ for any $s> 1$ to obtain a similar example exhibiting loss of $s>1$ derivatives.
 
Finally, one needs the persistence of regularity, asserting that the solution can be continued in the analytic space as long as the Sobolev norm remains finite. Experience shows that when a~priori estimates in Sobolev spaces are available, the persistence property holds too (as demonstrated below \eqref{002a}); therefore, we believe that the last ingredient is not restrictive.

\end{Remark} 

\section{Proof of Theorem~\ref{T1}}

We proceed in a few steps.\\

\noindent\texttt{Step 0.} We introduce analytic spaces $X(\tau )$, $Y(\tau )$, for $\tau >0$.\\

We say that a domain $\Omega\subset \R^3$ is analytic if there exists a Lipschitz function $d\colon \R^3 \to \R$ such that  $  \Omega = \{ d > 0 \}$, $|\nabla d |$ is bounded away from $0$ on some neighbourhood of $\p\Omega$,  and
\eqnb\label{def_an_domain}
d \text{ is a real analytic function on }Q,
 \eqne
 where $Q$ is an open neighbourhood of~$\p \wo$. We will use the  Komatsu \cite{K} notation for derivatives,
\eqnb\label{komatsu_not}
\p^i \coloneqq \bigsqcup_{\alpha \in \{ 1,2,3\}^i} D^\alpha u,
\eqne
where 
\[
D^\alpha u \coloneqq \p_{\alpha_1} \ldots  \p_{\alpha_i} u.
\]
For instance, $\p^2$ is the $3\times 3$ matrix of second order derivatives, so that, in particular, the same mixed derivative is included in $\p^i$ multiple times. 
As a consequence of such notation, we obtain
\[
\| D^i u \| = \sum_{\alpha \in \{ 1,2,3\}^i} \| D^\alpha u \|.
\]
Moreover, we have the product rule 
\eqnb\label{komatsu_product}
\p^i (fg) - f \p^i g = \sum_{k=1}^i {i \choose k } \p^k f \p^{i-k} g ,
\eqne
where the last product denotes the tensor product of derivatives that is consistent with \eqref{komatsu_not}
(see (5.1.3) in~\cite{K}). The product rule \eqref{komatsu_product} is a consequence of the  identity
\[
\sum_{\substack{\alpha' \subset \alpha \\ |\alpha' |=k }} {\alpha \choose \alpha' } = {m \choose k}
\]
for each $m\in \N$, $k\in \{ 0 , \ldots , m \}$, and~$|\alpha |=m$. 
 
We say that a vector field $\T=\sum_{i=1}^n a_i \p_i$ is a \emph{tangential operator} to $\p \Omega$ if $\T$ is a global analytic vector field such that $\T  d  =0$ on $\p \Omega$, where $ d  (x) $ is the distance function to the boundary $\p \Omega$, taking positive values inside $\Omega$ and negative outside; see \cite[Section~2.1]{CKV}
and \cite{JKL2}
for extensive discussions on tangential operators. We will abuse the notation, and denote by $\T$  the tensor of a complete system of tangential derivatives, namely a system such that  the components of~$\T$ span the tangent space at $x$, for each $x\in \p \Omega $; see  \cite[Theorem~3.1]{JKL2} for details. The system is chosen depending on the domain $\Omega$ only. For instance, as shown in \cite{JKL2}, we can use one tangential derivative in~2D and three in~3D, with a constant number $K=n(n-1)/2\in \N$ in any dimension $n\geq 2$.
 
 We define analytic norms as
 \begin{align}
  \begin{split}
   &
   \Vert v \Vert_{X(\tau)}
   \coloneqq 
  \sum_{i+j\geq r } c_{i,j} 
    \Vert \p^i \T^j  v\Vert , \qquad \text{ where } c_{i,j} \coloneqq \frac{(i+j)^r}{(i+j)!}\tau^{i+j-r }
  \overline{\epsilon}^i \epsilon^j,
    \\&
   \Vert v \Vert_{\tX(\tau)}
   \coloneqq 
   \Vert v \Vert_{X(\tau)}
   + \Vert v \Vert_{H^{r}},
  \\&
   \Vert v \Vert_{Y(\tau)}
   \coloneqq 
   \sum_{ i+j\geq r+1}
    \frac{
     (i+j)^{r+1}
        }{
     (i+j)!
    }
    \tau^{i+j-r-1}
    \overline{\epsilon}^i \epsilon^{j}
    \Vert \p^i \T^j  v \Vert,
    \\&
   \Vert v \Vert_{\widetilde{Y}(\tau)}
   \coloneqq 
  \tau  \Vert v \Vert_{Y(\tau)}
   + \Vert v\Vert_{H^{r}} ,\\&
   \Vert v \Vert_{\oY (\tau)}
   \coloneqq 
  \Vert v \Vert_{Y(\tau)}
   + \Vert v \Vert_{H^{r}},
  \end{split}
   \label{EQ19c}
  \end{align}
where $0< \overline{\epsilon} \ll \epsilon \ll 1$ are constants determined 
by~\eqref{p_est} below
and all the norms are taken on~$\Omega$.  We note that the $\tX$, $\tY$, $\oY$ norms induce Banach subspaces on $V$.  We also use the notation
\[
\| \cdot \| \coloneqq \| \cdot \|_{L^2 (\wo )}
\]
for the $L^2$ norm.  It follows from \eqref{def_an_domain}  that 
\eqnb\label{n_is_ana}
 \nn \circ \gamma  \in \tX ( 1; \widetilde{\Omega }),
\eqne
provided $\overline{\epsilon}/ \epsilon$ and $\epsilon$ are sufficiently small (depending on $\Omega$), where  $\gamma (x) $ is the closest point on $\p \Omega$ to $x\in \widetilde{\Omega}$, and   $ \widetilde{\Omega}$ is some neighbourhood  of $\partial \Omega$, and the space $\tX ( \tau_0; \widetilde{\Omega })$ is defined by the norms in~\eqref{EQ19c} above, i.e., $\| f\|_{\tX (\tau ; \widetilde{\Omega })}\coloneqq \|  f\|_{X (\tau ; \widetilde{\Omega })}+ \| f \|_{H^r(\widetilde{\Omega })}$, where
\eqnb\label{def_XtauOmega}
\| f\|_{X(\tau;\widetilde{\Omega })} \coloneqq \sum_{i+j\geq r} c_{i,j} \| \p^i \T^j f \|_{L^2 (\widetilde{\Omega })}.
\eqne
 In order to see \eqref{n_is_ana}, it can be shown that $\mathsf{n}\circ Y = \nabla  d $ on a sufficiently small neighbourhood of $\p \Omega$ (see \eqref{whoisnad} for a proof), and one can show that the $X(1;\widetilde{\Omega })$ norm is finite, by commuting the tangential derivatives $\T$ in the definition \eqref{def_XtauOmega} by employing the commutator estimates (see \cite[Lemma~3.1]{KOS}, \cite[Lemma~3.5]{CKV} or \cite[Section~5]{K} for details) and by using the Cauchy estimates (i.e., analytic estimates on subdomains, see \cite[Lemma~4.4]{SC} for details). We also note that analyticity radius $1$ in \eqref{n_is_ana} is achieved by taking $\overline{\epsilon}/\epsilon$, $\epsilon$ sufficiently small, depending on $\Omega$.

We note that the definitions \eqref{EQ19c} give 
\eqnb\label{an1}
\| f \|_{\tX} \lec_r \| f \|_{\tY } \qquad \text{ and } \qquad 
\| \na f \|_{\tX} \lec_r \| f \|_{\oY }
\eqne
as well as
\eqnb\label{an3}
\sum_{i+j\geq r} c_{i,j} \| \p^{i+1} \T^j  f \|  \lec_r \| f \|_{Y }.
\eqne
Here, and throughout the paper, we used the shorthand notation ``$\lec_r$'' for ``$\leq C(r)$ for some $C(r)>0$''.\\

\noindent\texttt{Step 0a.} We recall some estimates in the analytic norm of terms arising from incompressible fluids.\\

First, denote by 
\eqnb\label{def_Salpha}
S_{ij} (v,w) \coloneqq \p^i \T^j ((v\cdot \nabla )w )-v \cdot \nabla \p^i \T^j w
\eqne  
the commutator term arising from passing derivatives inside the advecting term. According to~(3.15) in~\cite{KOS}, we have
\eqnb\label{product_curved}
\sum_{i+j\geq r }c_{i,j}   \| S_{i j} (u,v)\|  \lec_r \| v \|_{\tY (\tau )} \| u \|_{\tX (\tau )} +\| v \|_{\tX (\tau )} \| u \|_{\tY (\tau )}.
\eqne
Second, if $p$ is a pressure function satisfying
\eqnb
\begin{split}
 -\Delta p
   &=
   \partial_{i} f_j \partial_{j} g_i \quad \text{ in }\Omega,\\
   - \nabla p \cdot \nn
   &= (v\cdot \nabla w)\cdot \nn \quad \text{ on }\p \Omega,
\end{split}
\notag
\eqne
for some  vector fields $f,g$ and  divergence-free  $v,w$ , then
\eqnb\label{p_est}
\| \nabla p \|_{X} \lec_r \| f\|_{\widetilde{X}}\| g\|_{\widetilde{X}}+\| v\|_{\widetilde{X}}\| w\|_{\widetilde{X}},
\eqne
provided $\epsilon$ and $\overline{\epsilon}/\epsilon$ are sufficiently small, see Lemma~3.3 in \cite{KOS} for a proof.\\

\noindent\texttt{Step 1.} We construct an analytic solution on an analytic domain on the time interval~$[0, T  ]$.\\

In this step we suppose that $\Omega$ is analytic in the sense of \eqref{def_an_domain} and, given initial data $u_0\in V$ such that $u_0 \in X(\tau_0)$, we find an analyticity radius function $\tau \in C^1 ([0, T  ];(0,\infty))$ and a unique solution $u \in C^0 ([0, T  ];\widetilde{X}(\tau(t)) )$ to \eqref{euler} satisfying the \emph{analytic a priori estimate}
\eqnb\label{apriori_an}
\frac{\d }{\d t } \| u \|_{\tX } - \dot \tau \| u \|_{Y} \leq C_1 \| u \|_{\tX }\|  u \|_{\tY}
\eqne
for all $t\in [0, T  ]$, where $ T  $ is given in~\eqref{time_lb}.\\

We emphasize that, even though we aim to find an analytic solution $u$, the time $ T  $ of existence depends only on the Sobolev norm $\| u_0 \|_{H^r}$ of the initial data (recall~\eqref{time_lb}). This is possible due to the \emph{persistence of analyticity}; see \eqref{002a} below. \\

In order to obtain~\eqref{apriori_an}, we apply $\p^i \T^j $ to the Euler equation \eqref{euler}, multiply by $\p^i \T^j u$ and integrate over $\Omega$ to get 
  \begin{align}
  \begin{split}
   &
   \frac12
   \frac{\d}{\d t}
   \int
   |\partial^i \T^j u|^2
       =-
         \underbrace{\int  u \cdot \nabla  \partial^i \T^j u \cdot \partial^i \T^j u
   }_{=0}     - \int S_{ij} (u  ,u )\cdot \partial^i \T^j u - \int \p^i \T^j \nabla p \cdot \p^i \T^j u.
  \end{split}
   \label{EQ08}
  \end{align}

Thus, recalling \eqref{product_curved} and~\eqref{p_est}, we can multiply \eqref{EQ08} by $c_{i,j}$ and sum in $i+j \geq r$, and use \eqref{an1} to obtain the a~priori estimate~\eqref{apriori_an}. 

We can now choose $\tau (t)$ such that \eqref{apriori_an} lets us control the solution until the final time $ T  $ as in~\eqref{time_lb}. Namely, if $\tau$ satisfies
\eqnb\label{tau_choice}
-\dot \tau \geq 2 C_1 \tau \| u \|_{\tX}
,
\eqne
then \eqref{apriori_an} gives
\eqnb\label{002a}
\frac{\d }{\d t } \| u \|_{\tX } - \frac{\dot \tau}2  \| u \|_{Y } \leq C_1  \| u \|_{\tX }  \| u \|_{H^r} .
\eqne
This shows that $\| u \|_{X(\tau )}$ remains bounded and $-\dot \tau \| u \|_{Y(\tau )}$ remains integrable as long as $\| u (t) \|_{H^r}$ remains bounded, provided the analyticity radius $\tau (t)$ satisfies~\eqref{tau_choice} and stays positive. 

\begin{Remark}[The key of the analytic method]\label{rem3}
We emphasize that the above trick  is the essence of the method in the sense that the analytic dissipation is used to absorb the derivative loss in the a~priori bound  \eqref{apriori_an}. To be precise, the derivative loss  is encoded in the $\tY$ norm on the right-hand side of \eqref{apriori_an}. Indeed, since the power of $i+j$ the definition~\eqref{EQ19c} of $\| \cdot \|_{Y} $  is larger than in the definition of $\| \cdot \|_{X}$, we see that, at the level of spatial derivatives, the $\| u \|_{\tY}$ norm is, roughly speaking, at the same level of $\| \nabla u \|_{\tX}$. Thus, the appearance of ``$-\dot \tau \| u\|_{Y}$'' on the left-hand side of \eqref{apriori_an} lets us absorb the ``$Y$ portion'' of the $\tY$ norm of the right-hand side, leaving only the ``Sobolev portion'', thanks to the condition \eqref{tau_choice}. Thus the ``$-\dot \tau \| u\|_{Y}$'' term  provides a notion of a dissipation (the \emph{analytic dissipation}) even though the equation \eqref{euler} is inviscid. Since $\| u \|_{H^r}$ can be controlled independently (see below), we have thus reduced 
the quadratic estimate in the analytic norm \eqref{apriori_an} into \eqref{002a}, which is linear in the analytic norm $\tX$ and naturally leads to local well-posedness in the analytic spaces (i.e., \emph{persistence of analyticity}). 
\end{Remark}

In order to control $\| u \|_{H^r}$, the standard Sobolev estimate (see~\cite{T1}) gives 
\[
\frac{\d }{\d t} \| u \|_{H^r} \leq C_0  \| u \|_{H^r}^2,
\] 
which in particular implies that 
\eqnb\label{002b}
\| u (t) \|_{H^r} \leq  2 \| u_0 \|_{H^r} \qquad \text{ for }\quad 
 0\leq  t\leq \frac{1}{4C_0 \| u_0 \|_{H^r}}.
\eqne
For such $t$, the inequality \eqref{002a} gives $\frac{\d }{\d t} \| u \|_{\tX} \leq 2C_0 \| u_0 \|_{H^r} \| u \|_{\tX }$, so that
\eqnb\label{002c}
\| u(t) \|_{\tX } \leq \| u_0 \|_{ \tX } \ee^{2C_0 t \| u_0 \|_{H^r} }  =: G(t),
\eqne
provided \eqref{tau_choice} holds. We can thus fix $\tau (t)$ by an ODE of the form  \eqref{tau_choice}, except with $\| u \|_{\tX}$ replaced by $G$, i.e., we define $\tau (t)$ by the ODE 
\eqnb\label{tau_choice_actual}
\dot \tau (t) = - 2C_1 \tau (t) G(t) ,\qquad \tau (0) = \tau_0.
\eqne
With such a choice of $\tau$, the inequality~\eqref{002a} gives the a~priori estimate
\eqnb
\label{004a}
\frac{\d }{\d t } \| u \|_{\tX } - \frac{\dot \tau}2  \| u \|_{Y } \leq 2 C_1  \| u \|_{\tX }  \| u_0 \|_{H^r} 
\eqne
for $t\in [0, T  ]$.   Moreover, we emphasize that $\tau$ remains bounded below away from $0$ on $[0,T]$, as $G(t)$ remains bounded on $[0,T]$, due to \eqref{002b}.  A simple Picard iteration now lets one construct a solution $u \in C([0, T  ];\tX (\tau ))$ such that $-\dot \tau u \in L^1((0, T  );\tY (\tau ))$ and the upper bounds \eqref{002b}, \eqref{002c} hold.  For example, one can consider $u^{(0)} \coloneqq u_0$, and 
$u^{(n)} \coloneqq u_0 - \int_0^t \left( u^{(n-1)} \cdot \nabla u^{(n-1)} + \nabla p^{(n-1)}\right)$ for $n\geq 1$, where $p^{(n)}$, for $n\geq 0$, is a solution to the Poisson equation $\Delta p^{(n)} = - \p_j u^{(n)}_i \p_i u^{(n)}_j$  in $\Omega$ with Neumann boundary condition $\p_{\mathsf{n}} p^{(n)} = (( u^{(n)} \cdot \nabla ) u^{(n)})\cdot \mathsf{n}$ on $\p \Omega$. Then one can show that $u^{(n)}$ is Cauchy in $C ([0,T]; \tX (\tau ))$ and satisfies $\sup_{n\geq 0} \int_0^T \left( -\dot \tau \| u^{(n)} \|_{Y (\tau )} \right) <\infty$, which implies the existence of the desired solution $u$. We refer the reader to \cite[Section~2.2]{KOS} for the details of such a Picard scheme.\\

\noindent\texttt{Step 2.} We construct a Sobolev solution on an analytic domain.\\

Here we prove Theorem~\ref{T1} in the case when $\Omega$ is analytic in the sense of~\eqref{def_an_domain}. To this end, we first show the following.

\begin{lemma}[Analytic approximation lemma]\label{L_an_approx}
Let $\Omega$ be a bounded, analytic domain (in the sense of \eqref{def_an_domain}).  Provided $\overline{\epsilon}/ \epsilon$ and $\epsilon$ (in \eqref{EQ19c}) are chosen sufficiently small, then, for all sufficiently small $\varepsilon>0$ there exists $S_\varepsilon \in B(V , V )$ with $\| S_\varepsilon \| \lec 1$ such that,  for each $v\in V$  , $ S_{\varepsilon } v  \in \oY (1)$  and
\[
\| S_\varepsilon v - v \|_{H^r} \to 0
\]
as $\varepsilon \to 0$. 
\end{lemma}
\begin{proof}
We denote by $E\in B(H^r (\Omega ), H^r (\R^3))$ the Sobolev extension operator (see \cite[Theorem~5.22]{AF}). Given $v\in \mathcal{H}_r$, we set
\[
v_\varepsilon \coloneqq \Phi (\varepsilon ) \ast Ev,
\]
where $\Phi(x, t)\coloneqq (4\pi t)^{-3/2} \mathrm{e}^{-|x|^2/4t} $ denotes the heat kernel.   We note that $v_\varepsilon \in Y(R)$ for any $R>0$, since $v_\varepsilon$ is entire.   Moreover, since the heat flow is continuous in $H^r (\R^3)$ (see Appendix~6.5.1 in \cite{OP}, for example), we have
\eqnb\label{heat_cty}
\|  v_\varepsilon - v \|_{H^r(\R^3 )} \to 0
\eqne
as $\varepsilon \to 0$. Let $S_\varepsilon v \coloneqq w$ be the unique $H^r$ solution to the Stokes system
\eqnb\label{stokes}
\begin{split} -\Delta w + \nabla p &= -\Delta v_\varepsilon ,\\
\div\, w &=0\qquad \text{ in }\Omega,\\
w\cdot \nn &=0 ,\\
w\times \nn &= v_\varepsilon \times \nn \qquad \text{ on }\p \Omega
,
\end{split}
\eqne
see~\cite[Proposition~2.2 in Ch.~1]{T2}. Then,
\[
\| w \|_{H^r (\Omega )} \lec \| v_\varepsilon \|_{H^r (\Omega )} \leq \| v_\varepsilon \|_{H^r (\R^3 )} \leq \| E v \|_{H^r (\R^3 )} \lec \| v \|_{H^r (\Omega )},
\]
which shows that  $S_\varepsilon v \in V $   for every $\varepsilon >0$. Moreover,
\[
\| S_\varepsilon v - v \|_{H^r} \leq \| S_\varepsilon v - v_\varepsilon  \|_{H^r} +\|  v_\varepsilon - v \|_{H^r(\Omega )} \to 0
\]
as $\varepsilon \to 0$, where we used \eqref{heat_cty} and, for the first term, we noted that $\widetilde{w}\coloneqq S_\varepsilon v - v_\varepsilon $ is a solution to
\eqnb\label{stokes1}
\begin{split} -\Delta \widetilde{w} + \nabla p &= 0 ,\\
\div\, \widetilde{w} &=-\div \,v_{\varepsilon }   \qquad \text{ in }\Omega,\\
\widetilde{w}\cdot \nn &=-v_\varepsilon \cdot \nn ,\\
\widetilde{w}\times \nn &= 0 \qquad \text{ on }\p \Omega,
\end{split}
\eqne
and so, by \cite[Proposition~2.2 in Ch.~1]{T2} and \eqref{heat_cty},  
\[\| \widetilde{w} \|_{H^r (\Omega )} \lec \|\div\, v_{\varepsilon}  \|_{H^{r-1} (\Omega )}+\| v_{\varepsilon} \cdot \nn   \|_{H^{r-1/2} (\p \Omega )}  \to \|\div\, v  \|_{H^{r} (\Omega )} +\| v \cdot \nn   \|_{H^{r-1/2} (\p \Omega )}=0\]
as $\varepsilon \to 0$.

Furthermore,  since $ v_\varepsilon  $ is entire, we also have  $S_\varepsilon  v  \in \oY (1 )$ provided $\overline{\epsilon}/ \epsilon$ and $\epsilon$ in \eqref{EQ19c} are chosen sufficiently small (depending on $\Omega$), which follows from the analyticity of solutions to the Stokes problem \eqref{stokes}, see, for example \cite[Theorem~3.1 and the line above (5.3)]{KX} as well as \cite[Theorem~2.7]{CKV} for a similar property for time-dependent Stokes equations. See also   \cite{JKL1,JKL2} and \cite{KP2} for further background related to analyticity results.
\end{proof}

We now consider a sequence $\varepsilon_k \to 0$ and use Lemma~\ref{L_an_approx} to obtain a sequence of approximations
\[
u_0^{(k)} \coloneqq S_{\varepsilon_k } u_0 .
\]
For each $k$, we use Step 1 to obtain a solution $u^{(k)}$ to the Euler equations \eqref{euler} on $[0, T  ]$, with the initial condition~$u_0^{(k)}$. In particular, for each $k$, the choice of $\tau$ in the analytic spaces \eqref{EQ19c} is defined by \eqref{tau_choice_actual}, except with $\tau_0$ replaced by~$1$. In particular, we see that $H^r$  estimate \eqref{002b} remains uniform with respect to $k $, and so we can find a subsequence of $u^{(k)}$ which converges weakly-$*$ in $L^\infty ((0, T  );H^r)$. We also note that $\p_t u^{(k)}$ is bounded in $C([0, T  ];H^{r-1})$, uniformly with respect to $k$, which lets us use the Aubin-Lions lemma (see~\cite[Theorem~2.1 in Section~3.2]{T2}, for example) to get a solution $u \in C ([0, T  ];H^r)$ to \eqref{euler} on $[0, T  ]$ with initial data~$u_0$. The uniqueness of such solution can be obtained by a simple energy argument in $C([0, T  ];L^2)$. \\

\noindent\texttt{Step 3.} We construct a Sobolev solution on a Sobolev domain.\\
(Namely we prove Theorem~\ref{T1} in full generality.)\\

\noindent\texttt{Step~3a.} For a given $\varepsilon>0$, we find an analytic domain  $\Omega^\varepsilon$  that approximates~$\Omega$.\\

Namely, since  $\Omega $ is a $H^{r+2}$ domain, the signed distance function $\phi (x)$ (with the convention that  $\phi(x)>0$ for $x\in \Omega$) is a Lipschitz function such that $H^{r+2} (\widetilde{\Omega })$ and $|\nabla \phi |$ is bounded below by a positive constant on $\widetilde{\Omega}$, where $\widetilde{\Omega}$ is some open neighbourhood of $\p \Omega$. This can be proved by a Sobolev version of the classical fact \cite[Lemma~14.16]{GT}, but for convenience of the reader, we include a detailed proof in Appendix~\ref{sec_level}. 

 Given $\varepsilon>0$, we set \[\phi_\varepsilon \coloneqq \Phi (\varepsilon )\ast  d  ,
\]
where $\Phi $ denotes the 3D heat kernel, and we let 
\[
 \Omega^\varepsilon  \coloneqq \{ \phi_\varepsilon >0 \}.
\]
Then, $\phi_\varepsilon $ is analytic by properties of the heat kernel (see \cite{KP2}, for example), and so $ \Omega^\varepsilon $ is an analytic domain; recall~\eqref{def_an_domain}. Moreover, $ \Omega^\varepsilon $ can be described locally as a graph of an analytic function, which can be proven using an analytic version of the Implicit Function Theorem; see~\cite{KP1}, and $ \Omega^\varepsilon  \to \Omega  $ in $H^{r+2}$ as $\varepsilon \to 0$, by following the analysis of \cite[Section~6.3]{A}. \\

\noindent\texttt{Step~3b.} For each $\varepsilon>0$ we construct  a measure-preserving $H^{r+2}$ diffeomorphism $\eta \colon \Omega \to  \Omega^\varepsilon $  such that $\| \eta - \mathrm{id} \|_{H^{r+2}}\to 0$ as $\varepsilon \to 0$.\\

 To this end, we will first construct a diffeomorphism $\eta \colon \Omega \to  \Omega^\varepsilon $ and then apply a correction to make it measure-preserving.\\

We let $r>0$ and $B_1, \ldots , B_L$ be a collection of open balls of radius $r$ that cover $\p \Omega $ and are such that, for each  $l\in \{ 1,\ldots , L\}$, the surface $\p \Omega\cap B_l$ is (up to rotation) the graph of an $H^{r+2}$ function.  Clearly, for sufficiently small $\varepsilon >0$, the same is true for $\p  \Omega^\varepsilon \cap B_l$ for $l\in \{ 1, \ldots , L\}$. We let $\beta \in (0,1)$ be sufficiently small so that  the collection $\{ (1-\beta ) B_l \}_{l=1}^L$ also covers $\p \Omega$, where $(1-\beta )B_l$ denotes the ball centered at the same point as $B_l$ and $(1-\beta)$ times larger radius. 
We let $\xi \in C_0^\infty (B(r))$ be such that $\xi = 1$ on $B((1-\beta )r)$, where $B(a)$ denotes an open ball of radius $a>0$ centered at $0$. We set $\eta_0 \coloneqq \mathrm{id}$. For each $l\in \{ 1, \ldots , L\}$ we define $Q_l \colon \R^3 \to \R^3$  as  a composition of the translation such that $B_l$ is mapped onto $B(r)$ and a rotation that maps $\eta_{l-1} (\p \Omega ) \cap B_l $, $\eta_{l-1} (\p  \Omega^\varepsilon ) \cap B_l$ onto graphs of some $H^{r+2}$ functions $F_0 $, $F$, respectively. Namely, $F_0,F\in H^{r+2} (\R^2 ; \R )$ are such that 
\[
Q_l (\eta_{l-1} (\p \Omega ) \cap B_l ) = \{ x\colon x_3 = F_0 (x') \} \cap B(r),
\]
and similarly for $F$, where we set $x'\coloneqq (x_1,x_2)$ for brevity. Letting
\[
{G_l} (x) \coloneqq x + (F(x')-F_0(x')) \xi (x) e_3,
\]
we see that $G_l$ is a $H^{r+2}$ bijection of $B(0,r)$ onto itself, provided $\varepsilon $ is sufficiently small. Moreover, $G_l = \mathrm{id}$ in a neighbourhood of $\p B(r)$. We set
\eqnb\label{eta_def}
\eta_l \coloneqq Q_l^{-1} \circ G_l \circ Q_l \circ \eta_{l-1}.
\eqne
We note that, for sufficiently small $\varepsilon >0$, the gradient of the second term on the right-hand side of \eqref{eta_def} is negligible compared to the identity matrix $I$, arising from the first term, guaranteeing that $\eta_l$ is a diffeomorphism with inverse of the regularity dictated by $F$, $F_0$ (using the Implicit Function Theorem), namely $H^{r+2}$, which is the same as~$\eta_l$. Moreover, for small $\varepsilon >0$, the collection $\{ (1-\beta ) B_l \}_{l=1}^L $ covers $\eta_l (\p  \Omega^\varepsilon )$ and the surface $\eta_l (\p \Omega^\varepsilon  ) \cap B_l $ is, up to rotation, the graph of an $H^{r+2}$ function. Hence, we can inductively define all $\eta_l$'s, until $\eta \coloneqq \eta_L$. By construction, $\eta $  satisfies the claim, and an inductive argument shows that $\eta \in H^{r+2}$ and that
\eqnb\label{eta_approx}
\| \eta - \id \|_{H^{r+2}} \to 0\qquad \text{ as }\varepsilon \to 0,
\eqne
as required.\\

 We now apply a correction to $\eta $ to make it measure-preserving. The main condition for this to be possible is the preservation of total measure. For this we set
\eqnb\label{def_lambda}
\lambda \coloneqq |\Omega |^{-\frac{1}3} \left(\int_\Omega ( \det \nabla \eta  )^{-1}\right)^{1/3},
\eqne
and we note that \eqref{eta_approx} gives 
\eqnb\label{lambda_conv}
\lambda\to 1 \qquad \text{ as } \varepsilon \to 0.
\eqne
Clearly, $\lambda  \Omega^\varepsilon  = \lambda \eta (\Omega )$ is an analytic domain as it is a homothethic transformation of an analytic domain. We now let $\zeta \in H^{r+2} (\Omega )$ be a diffeomorphism of $\Omega$ onto itself such that
\eqnb\label{zeta_eq}
\begin{split}
\det \nabla \zeta (x) &= \left( \det \nabla (\lambda \eta (x)) \right)^{-1}=: f(x)  \hspace{0.8cm}\text{ for }x\in \Omega ,\\
 \zeta (x)  & = x\hspace{4.5cm} \text{ for }x\in \p \Omega ,
\end{split}
\eqne
 and $\| \zeta - \mathrm{id} \|_{H^{r+2} } \leq C (\Omega  ) \| f - 1 \|_{H^{r+1}}  $. The existence of such $\xi$ follows from the generalization of the classical result of Dacorogna and Moser \cite{DM} to the Sobolev setting, see \cite[Lemma~9]{Ye} for details. In particular note that the total volume condition, which is the solvability condition of \eqref{zeta_eq}, holds, since $\int_\Omega f = | \Omega |$ by the definition \eqref{def_lambda} of $\lambda$.\\
 
 The point of \eqref{zeta_eq} is that 
\[
\eta'  \coloneqq \lambda \eta \circ \zeta 
\] 
is the desired measure-preserving correction of $\eta$. Indeed, it is a diffeomorphism of $\Omega $ onto $\lambda  \Omega^\varepsilon $, 
\eqnb\label{eta'_Hr_conv}
\| \nabla \eta'  -\mathrm{id} \|_{H^{r+2}} \to 0
\eqne
as $\varepsilon \to 0$, 
due to the same property of $\eta$, \eqref{lambda_conv} and the inequality following  \eqref{zeta_eq}.
Moreover,
\[
\det \nabla \eta'  = \det \nabla (\lambda \eta ) \,\, \det \nabla \zeta =1, 
\]
showing that $\eta'$ is measure-preserving. For simplicity, we now redefine $\eta$ to denote the correction $\eta'$ we have just constructed, and we redefine  $ \Omega^\varepsilon \coloneqq \eta (\Omega )$. Note that \eqref{eta'_Hr_conv} also gives that 
\eqnb\label{a_approx}
\| a - I \|_{H^{r+1}} \to 0\qquad \text{ as }\varepsilon \to 0,
\eqne
where $I$ denotes the identity matrix and $a\coloneqq (\nabla \eta )^{-1}$ (i.e., $a_{ji} = \p_i (\eta^{-1})_j$). \vspace{0.3cm}\\

\noindent\texttt{Step~3c.} For each sufficiently small $\varepsilon >0$, we construct divergence-free $v_0\in H^r ( \Omega^\varepsilon )$ such that $v_0 \cdot N =0$ on $\p  \Omega^\varepsilon $, where $N$ denotes the unit normal vector to $\p  \Omega^\varepsilon $ and $\| u_0 - v_0 \circ \eta \|_{H^r (\Omega )} \to 0$ as $\varepsilon \to 0$.\\

To this end, for each $\varepsilon >0$, we let $U \in H^r (\Omega)$ be such that 
\eqnb\label{U0_system}
\begin{cases}
\div\, U = \div ((I-a )U) ,&\\
\curl\, U =\curl \, u_0  &\text{ in }\Omega,\\
U \cdot \nn = ((I - a )U) \cdot \nn  &\text{ on }\p \Omega,
\end{cases} 
\eqne
where we used the summation convention, and $a\coloneqq (\nabla \eta )^{-1}$ (i.e., $a_{ji} = \p_i (\eta^{-1})_j$).  Given $U$, we  claim that
\[
 v \coloneqq U\circ \eta^{-1} \in H^r ( \Omega^\varepsilon )
\]
satisfies the claim of this step. Indeed, $v$  satisfies
\[
\begin{split}
\div\, v &= \p_i v_i = \p_j  U_i \p_i (\eta^{-1})_j = \p_j  U_i a_{ji}= \div (aU) =0  
\end{split}
\]
in $ \Omega^\varepsilon $,  where, in the fourth equality, we used the Piola identity $\p_j a_{ji} =0$, which is a consequence of the measure-preserving property $\det \, \nabla \eta =1$, established in the previous step, and we also used  the first equation of \eqref{U0_system} in the last step. Moreover $N(y)=  a(x)^T \nn(x) / |a^T \nn|$, where $y=\eta (x)$. Thus, since
\[
v(y)\cdot (a(\eta^{-1}(y))^T \nn(\eta^{-1}(y))) = U \cdot (a^T \nn ) = U_i a_{ji} \nn_j = (aU ) \cdot \nn = ((a-I )U ) \cdot \nn + U\cdot \nn =0
\]
for every $y\in \p  \Omega^\varepsilon $, where we omitted  $x=\eta^{-1}(y)$  in the notation, we have $v\cdot N=0$ on $\p  \Omega^\varepsilon $, as required.\\

 We now comment on the existence of $U$ solving \eqref{U0_system}. We let $\mathcal{V} \coloneqq H^{r-1} (\Omega ) \times H^{r-1} (\Omega ) \times H^{r-1/2} (\p \Omega )$, and we denote by $\mathcal{V}_0$ the  subspace of $\mathcal{V}$ consisting of triples $(f,g,b)$ such that
\eqnb\label{div_cond_S}
\int_\Omega f = \int_{\p \Omega } b
\eqne
and
\eqnb\label{23_cond}
\int_{\gamma } g \cdot \nn =0 
\eqne
for every connected component $\gamma$ of $\p \Omega$.
By \cite[Theorem~1.1]{CS} there exists a mapping $S \colon \mathcal{V}_0 \to H^r (\Omega )$ such that $w\coloneqq S(f,g,b)$ solves 
\eqnb\label{w_system}
\begin{cases}
\div\, w = f ,&\\
\curl\, w =g &\text{ in }\Omega,\\
w \cdot \nn = b &\text{ on }\p \Omega
\end{cases} 
\eqne
and satisfies the estimate
\eqnb\label{w_est}
\| w \|_{H^r} \lec \| f \|_{H^{r-1}} + \| g \|_{H^{r-1}} + \| b \|_{H^{r-1/2} (\p \Omega )}.
\eqne
Using the mapping $S$ we see that $U$ solves  \eqref{U0_system}  if
\eqnb\label{U_sys_new}
U = S\Bigl(\div ((I-a) U ) ,0 , ((I-a) U)\cdot \nn \Bigr) + S(0,\curl\, u_0 ,0) \qquad \text{ in } H^r (\Omega ).
\eqne
However, it is important to keep in mind that  $S$ is not necessarily a bounded, linear operator, since  solutions to the system  \eqref{w_system} are not unique, unless $\Omega$ is assumed to be simply connected (or a disjoint union of simply connected sets, see \cite{CS} for details). This means that we cannot use a fixed-point formulation to obtain a solution $U$ to \eqref{U_sys_new}. Instead we proceed in a more  constructive manner: We let $A_1 \coloneqq u_0 $, and, for $n\geq 2$, we set
\eqnb\label{369}
A_n \coloneqq S \left( \div \left( (I-a) A_{n-1} \right), 
  0 ,
  ((I-a)A_{n-1} )\cdot \nn
                \right).
\eqne
(Note that the conditions \eqref{div_cond_S}--\eqref{23_cond} are trivially satisfied.) We also set $U_n \coloneqq \sum_{k=1}^n A_k$. This way $U_n$ solves 
\eqnb\label{eq_Vn}
\begin{cases}
\div\, U_n = (\delta_{ij}-a_{ij})\partial_{i} (U_{n-1})_j
,&\\
\curl\, U_n =\curl \, u_0  &\text{ in }\Omega,\\
U_n \cdot \nn = ((I - a )U_{n-1}) \cdot \nn
, &\text{ on }\p \Omega
\end{cases} 
\eqne
for $n\geq 2$. In other words, $\{ A_n \}_{n\geq 2}$ is a sequence of ``additions'' to the first approximation $U_1=A_1 = u_0$ which give us a solution $U$ to \eqref{U0_system} in the limit, which we now verify. Note that, by \eqref{w_est} and \eqref{369}, 
\[\| A_n \|_{H^r} \leq C  \| (I-a)A_{n-1}  \|_{H^r} \lec C^{n-1} \| I-a \|_{H^r}^{n-1} \| A_1\|_{H^r}\]
for all $n\geq 1$, where $C>0$ is a constant depending only on $\Omega$ and the implicit constant in \eqref{w_est}. Thus, for  $\varepsilon >0$  sufficiently small so that $\| I-a \|_{H^r} \leq 1/2C$, we have
\[
\sum_{k\geq 1} \| A_k \|_{H^r} \lec \| u_0 \|_{H^r} \sum_{k\geq 1} (C\| I-a \|_{H^r} )^{k-1} \leq 2 \| u_0 \|_{H^r} <\infty.
\]
Therefore, there exists $U\in H^r$ such that $\| U_n -  U\|_{H^r} \to 0 $ as $n\to \infty$, and taking the limit $n\to \infty$ in \eqref{eq_Vn} gives \eqref{U0_system}, as required.\\

Finally, the last claim of this step follows by noting that $U - u_0 = \sum_{k\geq 2} A_k$, so that
\[
\| U - u_0 \|_{H^r} \leq \sum_{k\geq 2} \| A_k \|_{H^r} \leq  \| u_0 \|_{H^r} \sum_{k\geq 2} (C\| I-a \|_{H^r} )^{k-1} \leq 2 C \| u_0 \|_{H^r} \| I- a \|_{H^r} \to 0
\]
as $\varepsilon \to 0$. \vspace{0.3cm}\\

\noindent\texttt{Step~3d.} We take the limit $\varepsilon \to 0$.\\

For all sufficiently small $\varepsilon >0$ we consider an analytic domain $ \Omega^\varepsilon $ given by Step 3a, we obtain a diffeomorphism $\eta \colon \Omega \to  \Omega^\varepsilon $ given by Step 3b, and we consider $v_0\in H^r ( \Omega^\varepsilon )$ provided by Step 3c. We denote by $v$ the solution to the Euler equation \eqref{euler} on $ \Omega^\varepsilon $ given by Step 2. We note that
\eqnb\label{apr_v}
\| v (t) \|_{H^r ( \Omega^\varepsilon )} \leq  \| v_0 \|_{H^r ( \Omega^\varepsilon )} + C_0 ( \Omega^\varepsilon  ,r ) \int_0^t \| v(s) \|^2_{H^r ( \Omega^\varepsilon  )} \d s
\eqne
 for $t\in [0,T_0]$, where $T_0\coloneqq 1/4 C_0 ( \Omega^\varepsilon ,r) \| v_0 \|_{H^{r}( \Omega^\varepsilon )}$, by the Sobolev estimate~\eqref{sob_est}. We verify in Step~3e below that 
\eqnb\label{ts_step3e}
\text{ there exists }C_0 =C_0 (\Omega ,r) \text{ such that }
C_0( \Omega^\varepsilon  , r) \leq C_0 (\Omega ,r )
\eqne
for all sufficiently small $\varepsilon$, which, together with the fact that $\| v_0 \|_{H^r(\Omega^\varepsilon )} \to \| u_0 \|_{H^r(\Omega )}$ (a consequence of Steps~3b--c), gives that $T_0 \geq T$ for sufficiently small $\varepsilon$. (Recall~\eqref{time_lb} that, given $C_0$, we have $T=1/8C_0  \| u_0 \|_{H^r(\Omega )}$.)

 Together with Step 3c, the inequality~\eqref{apr_v} gives that, for sufficiently small $\varepsilon >0$,
\[
\| v(t) \circ \eta \|_{H^r (\Omega )} \leq 2 \| v_0 \|_{H^r (\Omega^\varepsilon )}\leq  4 \| u_0 \|_{H^r (\Omega )} \qquad \text{ for } t\in [0, T  ].
\]

Similarly as in Step 2, we also obtain a bound on $\p_t v (t) $ in~$H^{r-1} ( \Omega^\varepsilon )$. In fact, mapping back to $\Omega$ we obtain that $\p_t v \circ \eta $ is bounded in $C([0, T  ]; H^{r-1})$, uniformly with respect to~$\varepsilon$. This allows us to extract a sequence $\varepsilon_k\to 0$ along which $v\circ \eta $ (which depends on $\varepsilon_k$) converges  in $C([0, T  ];H^{r} (\Omega ))$  to some $u\in C([0, T  ];H^{r} (\Omega ))$.  In order to verify that $u$ solves the Euler equation \eqref{euler}, we first observe that, since $v$ satisfies the Euler equation in $ \Omega^\varepsilon $, it also satisfies the weak formulation 
\eqnb\label{v_weak}
 \int_\Omega \left( v(\cdot ,t)-v(\cdot ,s) \right) \cdot  \phi  = - \int_s^t \int_\Omega  v_j  v_i   \p_k \phi_i 
\eqne
for all $s,t\in [0,T]$ and every $\phi \in V$. Letting $w\coloneqq v\circ \eta$,  we see that \eqref{v_weak} implies that  
\eqnb\label{w_sys1}
 \int_\Omega \left( w(\cdot ,t)-w(\cdot ,s) \right) \cdot  \varphi  = - \int_s^t \int_\Omega  w_j  w_i   \p_k \varphi_i  a_{kj}
\eqne
for all $s,t \in [0,T]$ and every $\phi \in V $, where  $\varphi \in H^r (\Omega )$ is a solution to
\eqnb\label{varphi_sys}
\begin{cases}
\div\, \varphi = \div ((I-a )\varphi ) ,&\\
\curl\, \varphi =\curl \, \phi  &\text{ in }\Omega,\\
\varphi \cdot \nn = ((I - a )\varphi) \cdot \nn , &\text{ on }\p \Omega
\end{cases} 
\eqne
such that $\| \varphi - \phi \|_{H^r} \to 0$ as $\varepsilon \to 0$. (The existence of such $\varphi$ follows in the same way as in the case of system~\eqref{U0_system}, discussed in Step~3c above.) The point of~\eqref{varphi_sys} is that $\varphi \circ \eta^{-1} \in H^r ( \Omega^\varepsilon )$ is divergence-free and satisfies the non-penetration boundary condition on $\p  \Omega^\varepsilon $, so that it could be used as a test function to obtain a weak formulation of the Euler equation for $v$ in $ \Omega^\varepsilon $. The formulation is~\eqref{w_sys1} after a change of variable. Taking the limit $\varepsilon \to 0$ in~\eqref{w_sys1} we thus obtain that $ \int_\Omega \left( u(\cdot ,t)-u(\cdot ,s) \right) \cdot  \phi  = - \int_s^t \int_\Omega  u_j  u_i   \p_j \phi_i  $ for all $s,t\in [0,T]$, $\phi \in V$, which shows that $u$ satisfies the weak formulation of the Euler equation on $\Omega$, as required.    \\

Finally,   taking the limit $\varepsilon \to 0$ in \eqref{apr_v}, we see that $u$ is the required solution satisfying the a~priori bound \eqref{sob_est1}.\vspace{0.3cm}\\

 \noindent\texttt{Step~3e.} We prove \eqref{ts_step3e}.\\

We let $u(x,t) \coloneqq v(\eta(x),t)$, and we will show that
\eqnb\label{u_apr}
\| u(t) \|_{H^r} \leq \| u(s) \|_{H^r} + C_0 \int_s^t \|  u \|_{H^r }^2
\eqne
for all sufficiently small $\varepsilon$, where  $C_0>0$ is a constant independent of $\varepsilon$. The claim then follows from the fact that $\| \eta - \mathrm{id }\|_{H^{r+2}}\to 0$ as $\varepsilon \to 0$ (recall Step~3b).\\

We first note that, since $\p_t v + (v\cdot \nabla )v + \nabla p=0$ in $\Omega^\varepsilon$, the pressure function $p$ satisfies
\eqnb\label{p_sys0}
\begin{split}
-\Delta p &= \p_i v_j \p_j v_i\hspace{1cm} \text{ in }\Omega^\varepsilon,\\
\p_{N} p & = -(v_i \p_i v ) \cdot {N }\hspace{0.6cm} \text{ on }\p \Omega^\varepsilon, 
\end{split}
\eqne
where $N(y)$ denotes the outward unit normal vector to $\p \Omega^{\varepsilon }$ at $y$. We claim that \eqref{p_sys0} can be rewritten as
\eqnb\label{p_rewrit}
\begin{split}
-\Delta p & = \p_i v_j \p_j v_i \hspace{2cm} \text{ in }\Omega^\varepsilon ,\\
\p_i p \p_i \phi_\varepsilon &= - v_i v_i \p_{ij} \phi_\varepsilon \hspace{2cm} \text{ on }\p \Omega^\varepsilon ,
\end{split}
\eqne
where $\phi_\varepsilon$ is the level set function obtained by convolving $\phi$ with the heat kernel (recall Step~3a). This claim can be justified using a  derivative reduction trick due to Temam~\cite{T1}, which we now discuss.

Since $\Omega^\varepsilon = \{ \phi_\varepsilon = 0 \}$ (recall Step 3a) and $\nabla \phi_\varepsilon = -| \nabla \phi_\varepsilon |  \, N$ on $\p \Omega^\varepsilon$, we see that the boundary condition $v\cdot N=0$ on $\p \Omega^\varepsilon$ implies that 
\[
v\cdot \nabla \phi_\varepsilon =0 \quad \text{ on } \p \Omega^\varepsilon .
\]
Thus $\p \Omega^\varepsilon$ is also the $0$-level set of $v\cdot \nabla \phi_\varepsilon$ (apart from being the $0$-level set of $\phi_\varepsilon$), and so 
\eqnb\label{levelsets}\nabla ( v\cdot \nabla \phi_\varepsilon )= h\, \nabla \phi_\varepsilon ,
\eqne
where $h$ is some scalar function. Therefore, using again the fact that $N=- \nabla \phi_\varepsilon / |\nabla \phi_\varepsilon |$, we have
\[
(v_i \p_i v)\cdot N =-\frac1{|\nabla \phi_\varepsilon |} v_i \p_i v_j \p_j \phi_\varepsilon = - \frac1{|\nabla \phi_\varepsilon |} v_i  v_{j} \p_{ij} \phi_\varepsilon,  
\]
where we used \eqref{levelsets} in the last step. Thus \eqref{p_rewrit} follows, since $\nabla p \cdot N = -\p_i p \p_i \phi_\varepsilon / |\nabla \phi_\varepsilon |$.\\

We now set $q (x) \coloneqq p (\eta (x))$, $u(x) \coloneqq v(\eta (x))$, so that  
\[
p(y)= q(\eta^{-1} (y)),\qquad v(y) =  u(\eta^{-1} (y))
\]
for every $y\in \Omega^\varepsilon$, and we aim to rewrite \eqref{p_rewrit} as an elliptic boundary value problem for $q$. We have that 
\[
\p_i p  = \p_j q (\eta^{-1} ) \p_i(\eta^{-1})_j = a_{ji} \p_j q,  
\]
and so
\[
\Delta p = \p_k (a_{ki} a_{ji} \p_j q ),
\]
where we used the Piola identity $\p_k a_{ki}=0$. Similarly, we obtain $\p_i v_j = \p_k u_j  a_{ki}$ for all $i,j \in \{ 1,2,3\}$, so that the Poisson equation for $p$ from \eqref{p_rewrit} becomes the Poisson equation
\eqnb\label{q_eq1}
- \Delta q = \p_k ( ( a_{ki} a_{ji} - \delta_{kj} ) \p_j q ) + \p_k u_j  \p_m u_i a_{ki} a_{mj}
\eqne
for $q$ on $\Omega$. Similarly, the boundary condition from \eqref{p_rewrit} becomes
\eqnb\label{bc_new}
\p_k q a_{ki} \p_l (\phi_\varepsilon \circ \eta ) a_{li} = -u_i u_j \p_k \left( a_{ki} a_{lj} \p_l (\phi_\varepsilon \circ \eta ) \right) ,
\eqne
so that the Neumann boundary condition for $q$ on $\p \Omega$ can be written as 
\eqnb\label{q_eq2}\begin{split}
-\p_{\mathsf{n}} q &= -\p_k q \, \mathsf{n}_k = \p_k q \p_k \phi /|\nabla \phi |  \\
& = \frac{\p_k q}{|\nabla \phi |}  \p_l \phi a_{ki} a_{li} +\frac{\p_k q}{|\nabla \phi |} ( \delta_{kl} - a_{ki}a_{li}) \p_l \phi   \\
&= -\frac{1}{|\nabla \phi |} u_i u_j \p_k \left( a_{ki} a_{lj} \p_l (\phi_\varepsilon \circ \eta ) \right) +\frac{\p_k q}{|\nabla \phi |}  a_{ki}a_{li} ( \p_l \phi - \p_l (\phi_\varepsilon \circ \eta )) + \frac{\p_k q}{|\nabla \phi |} ( \delta_{kl} - a_{ki}a_{li}) \p_l \phi  ,
\end{split}
\eqne
where we noted that $\mathsf{n} = -\nabla \phi / |\nabla \phi |$ on $\p \Omega$ in the second equality, and used \eqref{bc_new} in the last equality.

Having rewritten \eqref{p_rewrit} into the boundary value problem \eqref{q_eq1}, \eqref{q_eq2}, we now recall that $| \nabla \phi |^{-1} \in H^r (\widetilde{\Omega} )$ for some neighborhood $\widetilde{\Omega}$ of $\p \Omega$, and that spaces $H^r$ and $H^{r+1}$ are algebras. Thus, elliptic regularity of the boundary value problem \eqref{q_eq1}, \eqref{q_eq2} for $q$ gives 
\[
\begin{split}
\| q \|_{H^{r+1}} &\lec_r \| u \|_{H^r}^2\left( 1   +\|  \phi_\varepsilon \circ \eta  \|_{H^{r+2} ( \widehat{\Omega} )} \right) \\
&\quad + \| q \|_{H^{r+1}} \left(  \| I -a^T a \|_{H^r} + \| \phi - (\phi_\varepsilon \circ \eta ) \|_{H^{r+1}(\widehat{\Omega})} + \| I - a^T a \|_{H^r } \| \phi \|_{H^{r+1}(\widetilde{\Omega})} \right) \\
&\lec_{\Omega,r}  \| u \|_{H^r}^2 + \left( \| u \|_{H^r}^2 +  \| q \|_{H^{r+1}} \right) \left( \| \phi - (\phi_\varepsilon \circ \eta ) \|_{H^{r+2}( \widehat{\Omega} )} + \| I -  a \|_{H^r }  \right) 
\end{split}
\]
where $\widehat{\Omega} \subset \subset \widetilde{\Omega } $ is a neighbourhood of $\p \Omega$, and we used the  fact that $\| a \|_{H^{r+1}} \lec 1$ (a consequence of \eqref{a_approx}) in the first inequality. In the second inequality we incorporated the $\phi$ dependence into the implicit constant (dependent on $\Omega$). Hence, since $\phi_\varepsilon \to \phi$ in $H^{r+2}$ (a property of convolution with heat kernel) and $\eta \to \mathrm{id}$ in $H^{r+2}$ (recall~\eqref{eta_approx}), the last bracket in the above inequality vanishes as $\varepsilon \to 0$, yielding  
\eqnb\label{q_eqf}
\| q \|_{H^{r+1}} \lec_{\Omega ,r } \| u \|_{H^r}^2.
\eqne  

This estimate on $q$ allows us to perform Sobolev estimates on $u$. Indeed, since $\p_t v + v_j \p_j v + \nabla p =0$ in $\Omega^\varepsilon$ we have 
\eqnb\label{u_pde}
\p_t u_i  + u_j \p_k u_i a_{kj}+ \p_m q \,a_{mi} =0
\eqne
in $\Omega$, so that, taking any partial derivative $D^\alpha$, with $|\alpha | =r$, of \eqref{u_pde}, multiplying by $D^\alpha u_i$, summing in $i$ and integrating over $\Omega$, we obtain
\eqnb\label{u_Hr}
\frac12 \frac{\d }{\d t }\| D^\alpha u \| \lec_r \sum_{\substack{\beta \leq \alpha \\ \beta >0 }} \| D^\beta ( a u) \cdot \nabla D^{\alpha-\beta } u \| + \| a^T \nabla q \|_{H^r} \lec_r \| u \|_{H^r}^2 ,
\eqne
where in the first inequality we used the fact that $\div \, v =0 \Leftrightarrow \div (au) =0$ (by the Piola identity) to obtain the cancellation
\[\begin{split}
\int_\Omega a_{kj} u_j \p_k D^\alpha u_i \, D^\alpha u_i& = \frac12  \int_\Omega (au ) \cdot  \nabla |D^\alpha u |^2 = \int_{\p \Omega } | D^\alpha u |^2 (au)\cdot \mathsf{n} = \int_{\p \Omega } | D^\alpha u |^2 u\cdot (a^T \mathsf{n}) \\
&= \int_{\p \Omega^\varepsilon } | (D^\alpha u)\circ \eta^{-1} |^2 \underbrace{v \cdot N}_{=0} \, |(a^T \mathsf{n} )\circ \eta^{-1}| =0 
\end{split}
\]
(recall from Step 3c that $N(y) = a^T \mathsf{n}(x) / |a^T \mathsf{n}(x)|$); we also used \eqref{q_eqf} and $ \| a\|_{H^r}\lec 1$ in the second inequality. Supplementing \eqref{u_Hr} with an $L^2$ estimate, we can integrate it in time to obtain \eqref{u_apr}, as required.

\section*{Acknowledgments}
IK~was supported in part by the
NSF grant
DMS-2205493, while
WO~was supported the NSF grant no.~DMS-2511556 and by the Simons grant SFI-MPS-TSM-00014233. 



\appendix
\section{The signed distance function of a Sobolev domain}\label{sec_level}

As mentioned in Step 3a above, here we show that, if $d(x)$ denotes the signed distance function of $\p \Omega$ of a $H^{r+2}$ domain $\Omega$ (with $\Omega = \{ d >0 \}$), then $d$ is Lipschitz and there exists an open neighhourhood $\widetilde{\Omega }$ of $\p \Omega$, such that $d\in H^{r+2} (\widetilde{\Omega })$ and $| \nabla d |$ is bounded away from $0$ on $\widetilde{\Omega}$. To this end, we first recall \cite[(14.91)]{GT} that $d$ has  Lipschitz constant $1$. Given $x\in \R^n$, let $y\in \R^{n-1}$ be such that $(y,f(y))$ is the closest point to $x$ on the graph of $f$. Note that there exists a sufficiently small open neighbourhood of $\p \Omega$ such that $y(x)$ is uniquely defined if $x$ belongs to it.   We will show that for each $x_0\in \p \Omega $ there exists $\delta >0$ such that 
\eqnb\label{d_cl_loc}
d\in H^{r+2} (B(x_0 ,\delta ))\quad \text{ and }\quad | \nabla d | =1 \text{ in }B(x_0,\delta ).
\eqne
The claim then follows by compactness of $\p \Omega$.\\

In order to prove \eqref{d_cl_loc}, we assume, without loss of generality,  that $x_0=0$, $f(0)=0$ and $\nabla f (0 ) =0$. We set 
\eqnb\label{Hdelta} 
H(\delta ) \coloneqq \| \nabla f \|_{L^\infty ( B'(2\delta ) )} \lec \delta, \eqne
where we used the notation $B'(2\delta )$ for the open ball  in $\R^{n-1}$ which is centered at $0$ and has radius $2\delta $. 
Note that, since $y$ is the minimizer of the distance function, $(x_1-y_1)^2 + \ldots + (x_{n-1}- y_{n-1} )^2 + (x_n - f(y))^2$, it satisfies 
\eqnb\label{eq_y}
y = x' + \nabla f (y) (x_n - f(y)).
\eqne
We set $y^0 (x) \coloneqq x'$, and
\eqnb\label{eq_yk}
y^k  \coloneqq x' + \nabla f (y^{k-1}) (x_n - f(y^{k-1})) 
\eqne
for $k\geq 1$.   \vspace{0.5cm}\\

We first show that $y$ is characterized as the pointwise limit of the $y^k$'s, that is
\eqnb\label{c1}
y(x) = \lim_{k\to \infty } y^k (x)\in B' (2\delta ) \qquad \text{ for all } x\in B(\delta ).
\eqne
Indeed, note that
\[
| y^1 - x' | = | \nabla f (x' ) (x_n - f(x' ))| \leq C_f \delta ( \delta + |f(x' ) |) \leq  C_f \delta^2  
\]
for each $x\in B(\delta )$ if $\delta $ is sufficiently small,
where we used \eqref{Hdelta} in the first and second inequalities. Moreover, for each $k\geq 1$, $x\in B(\delta )$, we have
\eqnb\label{ind_k}
|y^{k+1} - y^{k}| \leq C_f \delta | y^{k} - y^{k-1} | ,
\eqne
provided $\delta $ is sufficiently small. Indeed, given $l\geq 1$, if \eqref{ind_k} holds for $k\leq  l-1$, we have 
\eqnb\label{yl_bd}| y^{l-1} | \leq |x' | + \sum_{k=1}^{l-1} | y^{k} - y^{k-1} |  \leq \delta + \delta \sum_{k=1}^{l-1} (C_f \delta)^k \leq 2\delta  \eqne
if $\delta $ is sufficiently small. In particular, \eqref{Hdelta} gives $\| f \circ y^{l-1} \|_{L^\infty (B(\delta ))} \leq C_f \delta^2 $. Thus, writing
\[
\begin{split}
y^{k+1} - y^k  &= \nabla f(y^k ) (x_n - f (y^k )) - \nabla f(y^{k-1} ) (x_n - f (y^{k-1} ))\\
&=  \nabla f(y^k ) (f(y^{k-1} ) - f(y^k ) ) + \left( \nabla f(y^k ) - \nabla f (y^{k-1}) \right) (x_n - f(y^{k-1}) ),
\end{split}
\]
we obtain
\[
| y^{k+1} - y^k | \leq  \left( \| \nabla f \|_{\infty } + \| D^2 f \|_\infty \| x_n - f\circ y^{k-1} \|_\infty  \right) |y^{k }  - y^{k-1} | \leq C_f \delta |y^{k }  - y^{k-1} |,
\]
proving \eqref{ind_k} for $k=l$. Therefore, if $\delta <C_f^{-1}$, \eqref{ind_k} gives that $\{ y^k (x) \}_{k\geq 1}$ is Cauchy in $\R^{n-1}$ for each $x\in B(\delta )$. This and \eqref{yl_bd} show that $y^k \to y$ for some $y\in B' (2\delta )$. Taking the limit $k\to \infty $ in \eqref{eq_yk} shows \eqref{c1}, as required.\vspace{0.5cm}\\

We now want to show that 
\eqnb\label{c2}
\sup_{k\geq 1 } \| y^k \|_{H^{r+1}(B(\delta ))} <\infty  ,
\eqne
provided $\delta $ is sufficiently small. To this end, we first recall that, for every $\delta \in (0,1/2)$ and every $q\colon \R^{n} \supset B(\delta ) \to B'(2\delta ) \subset \R^{n-1}$ satisfying
\eqnb
\notag
\|  q (x)- x'  \|_{C^1 (\overline{B(\delta )}) } \leq \frac1{2\sqrt{n}},
\eqne
  we have
\eqnb\label{coar}
\int_{B(\delta ) }  \left| g(q(x),x_n ) \right|^2  \d x \lec \int_{B'(2\delta ) \times (-\delta , \delta ) }  \left| g(x ) \right|^2  \d x    
\eqne
for any $g\in L^2 (B'(2\delta ) \times (-\delta , \delta ) )$. This inequality can be easily proved using the Coarea Formula, but here we give a brief elementary proof. We see that $Q(x)\coloneqq (q(x),x_n)$ satisfies
\[
|Q(x)-Q(z) | \geq | x-z| - |(\mathrm{id}' - q)(x) - (\mathrm{id }' -q )(z) | \geq \frac12 |x-z|
\]
for all $x,z\in B(\delta )$, which gives that $Q$ is a $C^1$ injection of $B(\delta ) $ onto $Q(B(\delta )) \subset B'(2\delta ) \times (-\delta ,\delta )$. We have also denoted by $\mathrm{id}'$ the identity map on $\R^{n-1}$. Moreover, 
\eqnb\label{det0}
|\det \nabla Q |\geq 1/2^n >0 
\eqne
in $B(\delta )$. Indeed, if, for any $x\in B(\delta )$, $\lambda \in \C$ is an eigenvalue of $\nabla Q (x) $, with some eigenvector $v\in \C^{n-1}$ then $\lambda - 1 $ is an eigenvalue of $\nabla Q (x) - I$ so that
\[
| (\lambda -1 ) v | = | (\nabla Q(x) - I ) v | \leq \frac12 | v | .
\]
Thus $| \lambda | \geq 1 - |\lambda - 1 | \geq 1/2$, and \eqref{det0} follows. In particular $|\det (\nabla Q^{-1})| \lec_n 1$, so that  $Q$ is a $C^1 $ diffeomorphism of $B(\delta )$, and so
\[\begin{split}
\int_{B(\delta ) }  \left| g \circ Q  \right|^2   &
= \int_{Q(B(\delta ))} |g |^2 | \det (\nabla Q^{-1})|    \lec_n  \int_{B'(2\delta ) \times (-\delta , \delta ) } |g |^2  ,  
\end{split}
\]
showing \eqref{coar}.\\

Thus, before proving \eqref{c2}, we first verify that 
\eqnb\label{C1_bd}
\| y^k (x) - x' \|_{C^1 (\overline{B(\delta )})} \leq \frac1{2\sqrt{n}} 
\eqne
for all $k\geq 0$. Note that the case $k=0$ is trivial. We set 
\[
h(x ) \coloneqq \nabla f (x' ) (x_n - f(x')), 
\]
and note that, by \eqref{eq_yk}, 
\[
y^k(x) - x' = h(y^{k-1}(x),x_n)
\]
for all $x\in B(\delta )$, $k\geq 1$. Moreover,
\[
\| h \|_{C^1 (\overline{B'(2\delta )\times (-\delta ,\delta )})} \to 0
\]
as $\delta \to 0$. Thus, supposing that \eqref{C1_bd} holds for $k-1$, we have 
\[
\| y^k(x) - x'  \|_{C^1 (\overline{B(\delta )})} \lec \| h \|_{C^1 (\overline{B'(2\delta )\times (-\delta ,\delta )})} \underbrace{\left( 1+\| y^{k-1} \|_{C^1 (\overline{B(\delta )})} \right)}_{\lec 1},
\]
which vanishes as $\delta \to 0$. Taking $\delta $ sufficiently small gives \eqref{C1_bd}.\\

Having observed \eqref{coar} and \eqref{C1_bd}, we now show that 
\eqnb\label{sob_bd}
 \| y^{k}(x) - x' \|_{H^{r+1} (B(\delta ))} \leq \frac1{2} \eqne
for all $k\geq 0$. Indeed, it is trivial for $k=0$. For  $k\geq 1$ we suppose that \eqref{sob_bd} holds for $1,\ldots , k-1$, and observe that 
\[\begin{split}
\| y^{k}(x) - x' \|_{H^{r+1} (B(\delta )) } &\leq 
\| h( y^{k-1}(x),x) \|_{H^{r+1} (B(\delta )) } \lec_n \| h \|_{H^{r+1} (B'(2\delta ) \times (-\delta , \delta ))} (1+ \| y^{k-1} \|_{H^{r+1}} )^{r+1}\\ &
\lec_{r,n} \delta^{1/2} \| f \|_{H^{r+2} (B'(2 \delta ))} (1+ \| f \|_{H^{r+1} (B'(2 \delta ))} ) ,
\end{split}\]
where in the second step we used \eqref{C1_bd} to apply \eqref{coar} with $h(x) \coloneqq D^\alpha g$ for all multiindices $\alpha$ with $|\alpha | \leq r+1$. We also used the inductive assumption \eqref{sob_bd} in the last step. Taking sufficiently small $\delta$ proves \eqref{sob_bd}. The claim \eqref{c2} now follows from \eqref{sob_bd} and the triangle inequality.\vspace{0.5cm}\\

We now observe that \eqref{c1} and \eqref{c2} imply that  \[y\in H^{r+1}(B(\delta ))\] for sufficiently small $\delta $.\\

Having shown that $y\in H^{r+1} (B(\delta ))$, we denote by
\[
\gamma (x) \coloneqq (y(x) , f(y(x)))\in \p \Omega 
\]
the closest point to $x$ on $\p \Omega$, and we  denote by 
\eqnb
\notag
\mathsf{n} (\gamma (x)) \coloneqq \frac{1}{\left( 1+ | \nabla f (y(x)) |^2 \right)^{\frac12} } \left( \p_1 f (y(x)),\ldots , \p_{n-1} f(y(x)),-1 \right) 
\eqne
the outward normal vector to $\p \Omega$ at $\gamma (x)$. Since $y\in H^{r+1}$ and $f\in H^{r+2}$ we see that $\gamma \in H^{r+1} (B(\delta ))$ and $\mathsf{n} \circ \gamma   \in H^{r+1} (B(\delta ))$. 

Let us now focus on the distance function $d(x)$. We can write any $x\in B(\delta )$ in the form
\[
x= \gamma (x) - \mathsf{n} (\gamma (x)) d(x).
\]
Taking the dot product with $\mathsf{n} (\gamma (x))$ gives 
\[
d(x) = (\gamma (x)-x) \cdot \mathsf{n} (\gamma (x)).
\]

We note that at this point the regularities of $\mathsf{n} \circ \gamma $ and $\gamma $ suggest that we should only expect that $d\in H^{r+1}$. Remarkably, this can be lifted to $r+2$ by noting that
\eqnb\label{whoisnad}
\nabla d (x) = -\mathsf{n} (\gamma (x))
\eqne
for $x\in B(\delta )$. Indeed, we have 
\eqnb\label{d_char}
d^2(x) = | \gamma (x) - x |^2 = |y(x) - x'|^2 + |f(y(x))-x_n |^2,
\eqne
so that
\eqnb\label{nad_char}
\begin{split}
\nabla d^2&= 2\begin{pmatrix}
(y_1 - x_1 )(\p_1 y_1 -1) +  (y_2-x_2) \p_1 y_2 + \ldots +  (y_{n-1} - x_{n-1})\p_1 y_{n-1} + (f-x_n  )\p_k f \p_1y_k \\
\vdots \\
(y_1 - x_1 )\p_{n-1} y_1  +  (y_2-x_2) \p_{n-1} y_2 + \ldots +  (y_{n-1} - x_{n-1})(\p_{n-1} y_{n-1}-1) + (f-x_n  )\p_k f \p_{n-1}y_k\\
(y_1 - x_1 )\p_n y_1 +  (y_2-x_2) \p_n y_2 + \ldots +  (y_{n-1} - x_{n-1})\p_n y_{n-1} + (f-x_n  )(\p_k f \p_n y_k -1)
\end{pmatrix}\\
&=  2(f-x_n) \begin{pmatrix}
-\p_1 f (\p_1 y_1 -1) -\p_2 f   \p_1 y_2 - \ldots -\p_{n-1} f \p_1 y_{n-1} + \p_k f \p_1y_k \\
\vdots \\
-\p_1 f \p_{n-1} y_1  -\p_2 f \p_{n-1} y_2 - \ldots - \p_{n-1} f (\p_{n-1} y_{n-1}-1) + \p_k f \p_{n-1}y_k\\
-\p_1 f \p_n y_1 - \p_2 f  \p_n y_2 - \ldots -  \p_{n-1} f \p_n y_{n-1} + (\p_k f \p_n y_k -1)
\end{pmatrix}\\
&= 2(f-x_n) \begin{pmatrix}
\nabla f \\
-1 
\end{pmatrix},
\end{split}
\eqne
where  all components of $\nabla d^2$ and $\nabla y$ are evaluated at $x$, while $f$ and all components of $\nabla f$ are evaluated at $y(x)$. Here,  in the second equality, we recalled from \eqref{eq_y} that $y_i-x_i  =-  \p_i f  ( f-x_n  )$ for all $i=1,\ldots , n-1$. Using \eqref{eq_y} also in \eqref{d_char} we see that
\[
d = (f-x_n )\sqrt{ |\nabla f |^2 +1 }.
\]
This and \eqref{nad_char} give
\[
\nabla d = \left( |\nabla f|^2 +1 \right)^{-\frac12}  \begin{pmatrix}
\nabla f \\
-1  
\end{pmatrix}= -\mathsf{n} \circ \gamma ,
\]
yielding \eqref{whoisnad}, as required. Clearly \eqref{whoisnad} gives a gain of one derivative, so that $d\in H^{r+2}$, and also shows that $| \nabla d |=1 $ in $B(\delta )$, which concludes the proof of  \eqref{d_cl_loc}.

\small


\begin{thebibliography}{[cmsz]}

\bibitem[A]{A} 
C.A.~Antonini,
  \emph{Smooth approximation of {L}ipschitz domains, weak
  curvatures and isocapacitary estimates},
  Calc.\ Var.\ Partial Differential Equations~\textbf{63} (2024), no.~4, Paper No. 91,~34.

\bibitem[AF]{AF} 
R.~A.~Adams and J.~J.~F.~Fournier,
  \emph{Sobolev Spaces},
 Second Edition, Pure and Applied Mathematics Series, Elsevier, 2003.

\bibitem[AKOT]{AKOT}
M.S. Aydın, I. Kukavica, W.S. O\.za\'nski, A. Tuffaha,
\emph{Construction of the free-boundary 3D incompressible Euler flow under limited regularity},
J.~Differ. Equ.~\textbf{394} (2024), 209–-236.


\bibitem[C1]{C1} 
J.-M.~Coron,
  \emph{On the controllability of {$2$}-{D} incompressible perfect fluids},
  J.~Math.\ Pures Appl.~(9)~\textbf{75} (1996), no.~2, 155--188.


\bibitem[C2]{C2} 
J.-M.~Coron,
  \emph{Contr\^olabilit\'e exacte fronti\`ere de l'\'equation d'Euler des fluides parfaits incompressibles bidimensionnels},
 C. R. Acad. Sci. Paris, t.~317, S\'erie~I, pp.~271--276, 1993. 
 

  
  \bibitem[CKV]{CKV} 
G.~Camliyurt, I.~Kukavica, and V.~Vicol, 
\emph{Analyticity up to the boundary for the {S}tokes and the {N}avier-{S}tokes systems}, 
Trans.\ Amer.\ Math.\ Soc.~\textbf{373} (2020), no.~5, 3375--3422. 



\bibitem[CS]{CS} 
C.~H.~A.~Cheng, S.~Shkoller,
   \emph{Solvability and Regularity for an Elliptic System Prescribing the Curl, Divergence, and
Partial Trace of a Vector Field on Sobolev-Class Domains}, J. Math. Fluid Mech.~\textbf{19}, 375--422 (2017).

\bibitem[CS1]{CS1} 
D.~Coutand, S.~Shkoller,
   \emph{Well-posedness of the free-surface incompressible Euler equations with or
without surface tension}, J. Am. Math. Soc.~\textbf{20}(3), 829--930 (2007).

\bibitem[CS2]{CS2} 
D.~Coutand, S.~Shkoller,
   \emph{A simple proof of well-posedness for the free-surface incompressible Euler
equations}, Discrete Contin. Dyn. Syst. Ser~\textbf{3}(3), 429--449 (2010).

\bibitem[DM]{DM} 
B.~Dacorogna and J.~Moser,
\emph{On a partial differential equation involving the Jacobian determinant}, Ann. Inst. H. Poincar\'e C Anal. Non Lin\'eaire~\textbf{7} (1990), no. 1, 1--26.

\bibitem[E]{E}
D.~G.~Ebin,
  \emph{The equations of motion of a perfect fluid with free boundary are not well posed}, Commun. Partial Differ. Equ.~\textbf{12}(10), 1175--1201 (1987).
  

\bibitem[El]{El}
W.M.~Elsasser,
  \emph{The hydromagnetic equations},
  Phys.\ Rev.~\textbf{79} (1950), 183--183.

   

 \bibitem[JKL1]{JKL1}
J.~Jang, I.~Kukavica, and L.~Li, 
\emph{Mach limits in analytic spaces},
J.~Differential Equations, \textbf{299} (2021), 284--332.

\bibitem[JKL2]{JKL2}
J.~Jang, I.~Kukavica, and L.~Li, 
\emph{Mach limits in analytic spaces on exterior domains},
Discrete Contin.\ Dynam.\ Systems~\textbf{42} (2022), 3629--3659.

\bibitem[GT]{GT} D.~Gilbarg and N.~S.~Trudinger, \emph{Elliptic Partial Differential Equations of Second Order}, Reprint of the 1998 Edition, Springer-Verlag Berlin Heidelberg, 2001.

\bibitem[K]{K} 
  G.~Komatsu,
  \emph{Analyticity up to the boundary of solutions of nonlinear parabolic equations},
  Comm. Pure Appl. Math.~\textbf{32} (1979), no.~5, 669--720.


\bibitem[KNV]{KNV} 
I.~Kukavica, M.~Novack, and V.~Vicol,
  \emph{Exact boundary controllability for the ideal magneto-hydrodynamic equations},
  J.~Differential Equations~\textbf{318} (2022), 94--112.


\bibitem[KO1]{KO1}
I.~Kukavica and W.~S.~O\.za\'nski,
\emph{Exact boundary controllability of the 3D and 2D incompressible ideal MHD system},
Annals of PDE (to appear); arXiv:2410.02588.

\bibitem[KO2]{KO2} 
I.~Kukavica, W.~S.~O\.za\'nski,
  \emph{Local-in-time existence of free-surface 3D Euler flow with $H^{2+\delta}$ initial vorticity in a
neighborhood of the free boundary}, Nonlinearity~\textbf{36}(1), 636--652 (2023).


\bibitem[KOS]{KOS} 
I.~Kukavica, W.~S.~O\.za\'nski and M.~Sammartino,
  \emph{The inviscid inflow-outflow problem via analyticity}, Arch. Ration. Mech. Anal.~\textbf{249}, art.~no.~27, 2025.



\bibitem[KP1]{KP1} 
S.G.~Krantz and H.R.~Parks,
  \emph{The implicit function theorem},
  Modern Birkh\"{a}user Classics, Birkh\"{a}user/Springer, New York, 2013,
  History, theory, and applications, Reprint of the 2003 edition.

\bibitem[KP2]{KP2} 
S.G.~Krantz and H.R.~Parks,
  \emph{A primer of real analytic functions},
  second ed., Birkh\"{a}user Advanced Texts: Basler Lehrb\"{u}cher.
  [Birkh\"{a}user Advanced Texts: Basel Textbooks], Birkh\"{a}user Boston,
  Inc., Boston, MA, 2002.
  
  \bibitem[KX]{KX}
I.~Kukavica and Q.~Xu,
\emph{Analyticity up to the boundary for the divergence equation}, arXiv:2604.01665.

\bibitem[OP]{OP}
W.~S. O\.{z}a\'{n}ski and B.~C. Pooley, \emph{Leray's fundamental work on the {N}avier-{S}tokes equations: a modern
  review of {\it ``{s}ur le mouvement d'un liquide visqueux emplissant
  l'espace''}}.
\newblock In {\em Partial differential equations in fluid mechanics}, volume
  452 of {\em London Math. Soc. Lecture Note Ser.}, pages 113--203. Cambridge
  Univ. Press, Cambridge, 2018.
  
  
\bibitem[R]{R} 
M.~Rissel,
  \emph{Exact controllability of incompressible ideal magnetohydrodynamics in two dimensions},
  Pure Appl. Anal.~\textbf{7} (2025), Paper No. 3, 661--700.
  
  
\bibitem[RW1]{RW1} 
M.~Rissel and Y.-G.~Wang,
  \emph{Global exact controllability of ideal incompressible magnetohydrodynamic flows through a planar duct},
  ESAIM Control Optim.\ Calc.\ Var.~\textbf{27} (2021), Paper No. 103, 24.
    
    
\bibitem[RW2]{RW2}
M.~Rissel and Y.-G.~Wang,
  \emph{Small-time global approximate controllability for
        incompressible MHD with coupled Navier slip boundary conditions}, J. Math. Pures Appl.~\textbf{9}(190), Paper No. 103601  (2024).





\bibitem[S]{S} 
M.~Shinbrot,
  \emph{The initial value problem for surface waves under gravity. I. The simplest case},
  Indiana
Univ. Math. J.~\textbf{25}(3) (1976), 281--300.
   

\bibitem[SC]{SC} 
M.~Sammartino, R.~E.~Caflisch,
   \emph{Zero viscosity limit for analytic solutions of the
Navier-Stokes equation on a half-space I - Existence for the Euler and Prandtl equations}, Commun. Math. Phys.~\textbf{192}, 433--461 (1998).

\bibitem[T1]{T1}
R.~Temam,
\emph{On the Euler equations of incompressible perfect fluids},
J.~Functional Analysis~\textbf{20} (1975), no. 1, 32–43.

\bibitem[T2]{T2} 
R.~Temam,
  \emph{Navier-Stokes equations}, Theory and numerical analysis, Reprint of the 1984 edition, AMS Chelsea Publishing, Providence, RI, 2001. 
  
 \bibitem[W1]{W1}
S.~Wu, \emph{Well-posedness in Sobolev spaces of the full water wave problem in 2-{D}}, Invent. Math.~\textbf{130}(1), 39--72 (1997).

 \bibitem[W2]{W2}
S.~Wu, \emph{Well-posedness in Sobolev spaces of the full water wave problem in 3-{D}}, J. Am. Math. Soc.~\textbf{12}(2), 445--495 (1999).


 \bibitem[Ye]{Ye}
D.~Ye, \emph{Prescribing the Jacobian determinant in Sobolev spaces}, Ann. Inst. H. Poincar\'e C Anal. Non Lin\'eaire~\textbf{11} (1994), no. 3, 275--296.


\bibitem[Z]{Z}
Y.~Zhou,
\emph{Local well-posedness for the incompressible Euler equations in the critical Besov spaces},
Ann.\ Inst.\ Fourier (Grenoble)~\textbf{54} (2004), no. 3, 773--786.


\end{thebibliography}
\end{document}